\DeclareMathOperator{\diag}{diag}
\newtheorem{theorem}{Theorem}[section]
\newtheorem{corollary}[theorem]{Corollary}
\newtheorem{lemma}[theorem]{Lemma}
\newtheorem{define}[theorem]{Definition}
\newenvironment{proof}{{\it Proof :~}}{\hfill$\diamondsuit$\\}
\newtheorem{remark}{Remark}
\newtheorem{example}{Example}
\DeclareMathOperator*{\col}{col}
\DeclareMathOperator*{\He}{Sym}
\DeclareMathOperator{\eps}{\varepsilon}
\DeclareMathOperator{\co}{co}
\begin{document}

\begin{frontmatter}

\title{A looped-functional approach for robust stability analysis of linear impulsive systems}

\author[First]{Corentin Briat}\ead{briatc@bsse.ethz.ch,corentin@briat.info}\ead[url]{http://www.briat.info}
\author[Second]{Alexandre Seuret}\ead{Alexandre.Seuret@gipsa-lab.grenoble-inp.fr}\ead[url]{http://www.gipsa-lab.fr/\textasciitilde alexandre.seuret}

\address[First]{Swiss Federal Institute of Technology--Z\"{u}rich (ETH-Z), Department of Biosystems Science and Engineering (D-BSSE), Switzerland.}
\address[Second]{CNRS, GIPSA-Lab, Control Systems Department, France.}

\begin{keyword}
Impulsive systems, looped-functionals, Uncertain systems, LMIs
\end{keyword}

\begin{abstract}
A new functional-based approach is developed for the stability analysis of linear impulsive systems. The new method, which introduces looped-functionals, considers non-monotonic Lyapunov functions and leads to LMIs conditions devoid of exponential terms. This allows one to easily formulate dwell-times results, for both certain and uncertain systems. It is also shown that this approach may be applied to a wider class of impulsive systems than existing methods. Some examples, notably on sampled-data systems, illustrate the efficiency of the approach.
\end{abstract}

\end{frontmatter}

\section{Introduction}

Impulsive systems \citep{Bainov:89, Yang:01b, Cai:05, Cai:08, Hespanha:08, Michel:08, Goebel:09} are an important class of hybrid systems admitting discontinuities in the state trajectories, that are governed by discrete-time maps. They occur in several fields like epidemiology \citep{Stone:00,Briat:09h}, sampled-data and networked control systems \citep{Naghshtabrizi:08}, forestry \cite{Verriest:09d}, power electronics \cite{Loxton:09}, harvesting problems \cite{Yu:06,Lin:11}, etc. Among the wide class of impulsive dynamical systems, we may distinguish between systems whose impulse-times depend on the system state and those for which the impulse-times are external to system and only time-dependent. The latter class may be represented in the following form
\begin{equation}\label{eq:mainsyst}
  \begin{array}{rcl}
    \dot{x}(t)&=&Ax(t),\ t\in\mathbb{R}_+\backslash\mathbb{I}\\
    x^+(t)&=&Jx(t),\ t\in\mathbb{I}\\
    x(t_0)&=&x_0
  \end{array}
\end{equation}
where $x,x_0\in\mathbb{R}^n$ are the state of the system and the initial condition, respectively. The system matrices are possibly uncertain, i.e.  $(A,J)\in\mathcal{A}\times\mathcal{J}$ where
\begin{equation}\label{eq:polytopes}
\begin{array}{lclcl}
  \mathcal{A}:=\co\left\{A_1,\ldots,A_{N_A}\right\},&&A_i\in\mathbb{R}^{n\times n},\ i=1,\ldots,N_A\\
  \mathcal{J}:=\co\left\{J_1,\ldots,J_{N_J}\right\},&&J_i\in\mathbb{R}^{n\times n},\ i=1,\ldots,N_J
\end{array}
\end{equation}
where $N_A,N_J\in\mathbb{N}\backslash\{0\}$ and $\co\{\cdot\}$ is the convex hull. The state trajectory is assumed to be left-continuous and to have right-limits at all time. The notation $x^+(t)$ denotes the right limit of $x(s)$ as $s$ tends to $t$ from the right, i.e. $x^+(t)=\lim_{s\downarrow t}x(s)$. The set of impulsive times $\mathbb{I}:=\{t_k\}_{k\in\mathbb{N}}$ is a countable set of impulse instants $t_{k+1}>t_k$, $k\in\mathbb{N}$ and we define the inter-impulse distance as ${T_k:=t_{k+1}-t_k}$. This quantity is also referred to as \emph{dwell-time} in the literature. We also assume that the sequence $\{t_k\}_{k\in\mathbb{N}}$ has no accumulation point in order to exclude any Zeno behavior. 

Depending on the structure of the matrices $A$ and $J$, the system
may exhibit very different behaviors. In particular, notions of
minimal and maximal dwell-time can be defined for impulsive systems
\citep{Hespanha:08}, similarly as for switched systems
\citep{Geromel:06b, Liberzon:03}. In the case of impulsive systems,
these notions refer to system properties such that too large or too
short dwell-times destabilize the system. In the case of
periodic impulses with period $T>0$, the problem essentially reduces
to the study of the Schurness of the matrix $Je^{AT}$, which turns
out to be a very simple problem. However, this formulation suffers
from several critical drawbacks:
\begin{enumerate}
  \item The eigenvalue analysis is not extendable to the case of aperiodic impulses since the spectral radius is, in general, not submultiplicative;
  \item To overcome this, Lyapunov approaches can be applied but lead to robust Linear Matrix Inequalities (LMIs) with scalar uncertainties at the exponential, known to be complex numerically, yet solvable \citep{Oishi:10};
  \item The extension to robust stability analysis is also difficult, again due to the exponential terms. There is no efficient solution, at this time, to handle block matrix uncertainties at the exponential.
\end{enumerate}
The approach discussed in this paper aims at overcoming the above
important drawbacks and brings an efficient solution for the
characterization of robust-dwell times. The main tool used in this
paper is an extension of the one developed in \citep{Seuret:12} in
the context of sampled-data systems, itself triggered by a somewhat
different approach discussed in the anterior work
\citep{Naghshtabrizi:08} where impulsive systems are considered for
the representation of aperiodic sampled-data systems\citep{Sun:91, Sivashankar:94, Dullerud:99}. The core of the approach relies on the implicit but equivalent correspondence between
discrete- and continuous-time domains obtained in \citep{Seuret:12}.
It is shown there that discrete-time stability is equivalent to a
certain kind of continuous-time stability, provided that the latter
is proved using particular functionals referred to as \emph{looped-functionals}. One important feature of
pointwise stability criteria is to allow for non-monotonic
continuous-time Lyapunov functions (evaluated along the flow a system) since only a pointwise decrease is imposed instead of a continuous one. A discrete-time criterion
provides then a weaker condition for stability than classical
continuous-time ones. In the case of impulsive systems, by studying
the decrease of the Lyapunov function evaluated at impulse-times
only, non-monotonicity of the continuous-time Lyapunov function
between impulse-times may be tolerated.

The interesting point of the method is its wide adaptability to any type of systems having discrete-events, or more generally time-marks, which are time-instants for which the system admits certain regularity properties. For sampled-data and impulsive systems, these time-marks coincide with the control law update and impulse-times, respectively. Hence, by looking at a discrete-time Lyapunov function whose values at the marker-times are decreasing allows one to prove stability of the overall hybrid system. This concept is immediately extendable to other type of systems for which such marks may be defined.

The looped-functional-based approach introduced above leads to LMI-conditions that are \emph{affine in the dwell-time}, \emph{devoid of exponential terms} and \emph{able to consider the jumps precisely} through the non-monotonicity of the Lyapunov function. Indeed, expansive jumps on a certain state can be tolerated as long as it decreases sufficiently between jumps. Conversely, increasing between jumps is also allowed as long as the jumps are contracting. The obtained LMI conditions easily allow one to both characterize stability under periodic and aperiodic impulses. In both cases, the dwell-time $T_k=t_{k+1}-t_k$ plays an important role in the stability of the impulsive system. Several stability concepts are considered in this paper: stability with minimal dwell-time $T_k\in[T_{min},+\infty)$ where $T_{min}$ is the so-called \emph{minimal dwell-time}, stability with maximal dwell-time $T_k\in(0,T_{max}]$ where $T_{max}$ is the \emph{maximal-dwell time}, stability with ranged dwell-time $T_k\in[T_{min},T_{max}]$ and stability with arbitrary pulsing $T_k>0$. 
The domains of application of these different dwell-time concepts are summarized in Table \ref{tab:app}. It is important to stress that the case (3,3) cannot be handled using existing approaches, e.g. \citep{Cai:08,Hespanha:08}, since none of the system matrices are stable, while the proposed approach does not require such stability conditions on $A$ and $J$. This emphasizes that the proposed approach better captures the internal structure of the system. Thanks to the affine dependence of the obtained conditions on the system matrices, dwell-time notions are finally extended to the case of uncertain systems, a problem for which there is currently a lack of efficient solution techniques.

It seems important to stress that since the paper has been submitted, several improvements have been made. A more advanced explicit looped-functional has been proposed in \cite{Briat:12d}. An approach based on an implicit looped-functional has also been discussed in \cite{Briat:13a}.\\

\begin{table}
\centering
  \begin{tabular}{c||c|c|c|}
   \backslashbox{J}{A} & $\Re[\lambda(A)]<0$ & $\Re[\lambda(A)]>0$ & otherwise\\
    \hline
    \hline
    \multirow{2}{*}{$\rho(J)<1$} & arbitrary & \multirow{2}{*}{maximal DT} & ranged DT\\
    & minimal DT &           &     maximal DT  \\
    \hline
    $\rho(J)>1$ & minimal DT & --- & ---\\
    \hline
    otherwise & minimal DT & --- & ranged DT\\
    \hline
  \end{tabular}
  \caption{Application table for the developed results (DT stands for 'Dwell-Time')}\label{tab:app}
\end{table}

\noindent\textbf{Outline:} Section \ref{sec:prel} introduces a generalization of the result of \citep{Seuret:12}, at the core of the approach. Sections \ref{sec:nom} and \ref{sec:rob} are devoted to nominal and robust stability analysis using a class of looped-functionals fulfilling the conditions stated by the main theorem of Section \ref{sec:prel}. Illustrative examples are included in the related sections.\\

\noindent\textbf{Notations:} For symmetric matrices $A,B$, $A-B\prec(\preceq)0$ means that $A-B$ is negative (semi)definite. The sets of symmetric and positive definite matrices of dimension $n$ are denoted by $\mathbb{S}^n$ and $\mathbb{S}_{++}^n$ respectively. The spectral radius and the spectrum are denoted by $\rho(\cdot)$ and $\lambda(\cdot)$ respectively. Given a square matrix $A$, we define $\He[A]=A+A^T$.

\section{Preliminary definitions and results}\label{sec:prel}

\subsection{Lifting}

The functional based approach relies on the characterization of system (\ref{eq:mainsyst}) trajectories in a lifted domain, similar to the one used in sampled-data systems theory \citep{Yamamoto:90}, with the difference that the involved functions do not have identical support. Indeed, we view here the entire state-trajectory as a sequence of functions
$$\left\{x(t_k+\tau),\ \tau\in(0,T_k]\right\}_{k\in\mathbb{N}}.$$
The elements of the sequence have unique continuous extension to $[0,T_k]$ defined as
\begin{equation}
\begin{array}{rcl}
  \chi_k(\tau)&:=&x(t_k+\tau),\\
  \chi_k(0)&=&\lim_{s\downarrow t_k}x(s).
\end{array}
\end{equation}
We also have the following identities
\begin{equation}
  \begin{array}{rcl}
    \chi_k(\tau)&=&e^{A\tau}\chi_k(0),\\
    \chi_{k+1}(0)&=&J\chi_k(T_k).
  \end{array}
\end{equation}
Hence, in the following, the state-space of the impulsive system will be defined as the union set of continuous-functions
$$\mathbb{K}_{[T_{min},T_{max}]}:= \smashoperator{\bigcup_{T\in[T_{min},T_{max}]}}\left\{\vphantom{\sum} C([0,T],\mathbb{R}^n)\right\}$$
with support in a certain range. This varying support is necessary to consider the {aperiodicity} of the system. Note that when $T_{min}=T_{max}=\bar{T}$, the usual lifting space of periodic sampled-data systems is recovered \citep{Yamamoto:90} and degenerates to $\mathbb{K}_{\bar{T}}:=\mathbb{K}_{[\bar{T},\bar{T}]}=C([0,\bar{T}],\mathbb{R}^n)$.



\subsection{Stability analysis - Looped-functional-based results}

The definition below introduces the asymptotic stability of impulsive systems:
\begin{define}
  Given an increasing sequence of impulse instants $\{t_k\}_{k\in\mathbb{N}}$ having no accumulation point, the system (\ref{eq:mainsyst}) is globally asymptotically stable if for all $x_0\in\mathbb{R}^n$ and all $\delta>0$, there exists $\delta_0>0$ such that
  \begin{itemize}
\item $||x_0||\le\delta_0\Rightarrow||x(t)||\le\delta$ for all $t\ge t_0$ (stability),
\item $||x(t)||\to0$ as $t\to\infty$ (attractivity).
  \end{itemize}
  Alternatively, the asymptotic stability can also be defined in the state-space $\mathbb{K}_{[T_{min},T_{max}]}$ where the stability and attractivity definitions become
    \begin{itemize}
    \item $||x_0||\le\delta_0\Rightarrow\sup_{s\in[0,T_k]}||\chi_k(s)||\le\delta$ for all $k\in\mathbb{N}$,
    \item $\sup_{s\in[0,T_k]}||\chi_k(s)||\to0$ as $k\to\infty$.
  \end{itemize}
\end{define}

It is important to stress that the above stability definition strongly depends on the considered impulse sequence and may be very difficult to apply. The following weaker definition addresses the case where an entire family of impulse instants is considered:
\begin{define}
  The system is said to be globally asymptotically stable under ranged dwell-time if for any increasing sequence of impulse instants $\{t_k\}_{k\in\mathbb{N}}$ having no accumulation point and satisfying $t_{k+1}-t_k\in[T_{min},T_{max}]$, $T_{min}>\eps>0$, $k\in\mathbb{N}$, the system (\ref{eq:mainsyst}) is globally asymptotically stable.
\end{define}

The above definitions are clearly not the most general stability notions for hybrid systems, but are sufficient for the considered problem. The system being linear, properties such as stability are automatically global. Secondly, since accumulation points in the sequence of impulse times are excluded, the standard asymptotic stability notion is meaningful. It is hence not necessary to define hybrid time domains and the positive real axis (or the set of natural numbers) can be used to denote time. More general stability notions for hybrid dynamical systems, such as pre-asymptotic stability, can be found in \citep{Goebel:09}.

The following technical definition is necessary before stating the next very important result.
\begin{define}
  A functional $f:[0,T_{max}]\times\mathbb{K}_{[T_{min},T_{max}]}\times[T_{min},T_{max}]\to\mathbb{R}$, $\eps\le T_{min}\le T_{max}<+\infty$, $\eps>0$, is said to be a \textbf{looped-functional} if the following conditions are satisfied
\begin{enumerate}
  \item the equality
  \begin{equation}\label{Th02}
    f(0,z,T)=f(T,z,T)
  \end{equation}
  holds for all functions $z\in C([0,T],\mathbb{R}^n)\subset\mathbb{K}_{[T_{min},T_{max}]}$ and all $T\in[T_{min},T_{max}]$,
  and
  \item it is differentiable with respect to the first variable with the standard definition of the derivative.
\end{enumerate}
The set of all such functionals is denoted by $\mathfrak{LF}([T_{min},T_{max}])$.
\end{define}

The idea for proving stability of (\ref{eq:mainsyst}) is to look at the behavior of a candidate discrete-time Lyapunov function evaluated at the impulse instants, that is, we look for a positive definite quadratic form $V(x)$ such that the sequence $\{V(\chi_k(T_k))\}_{k\in\mathbb{N}}$ is monotonically decreasing\footnote{We may also look at the sequence $\{V(\chi_k(0))\}_{k\in\mathbb{N}}$ instead. The choice is purely arbitrary.}. This is formalized below through a functional existence result:
\begin{theorem}\label{Th0}
Let $\eps<T_{min}\leq T_{max}$ be three finite positive scalars and $V:
\mathbb R^n\rightarrow \mathbb R_+$ be a quadratic form verifying
\begin{equation}
\forall x\in \mathbb R^n,\quad
\mu_1||x||_2^2\leq  V(x)\leq \mu_2||x||_2^2\label{Th01}
\end{equation}
for some scalars $0<\mu_1\le\mu_2$. Assume that one of the following equivalent statements hold:
\begin{description}
\item[(i)] The sequence $\{V(\chi_k(T_k))\}_{k\in\mathbb{N}}$ is decreasing; that is $V(x)$ is a discrete-time Lyapunov function for system $x(t_{k+1})=e^{AT_k}Jx(t_k)$, $T_k\in[T_{min},T_{max}]$.

\item[(ii)] There exists a looped-functional $\mathcal{V}\in\mathfrak{LF}([T_{min},T_{max}])$ such that the  functional $\mathcal{W}_k$ defined for any $c_k\in\mathbb{R}$ as
\begin{equation}\label{eq:functdsq}
  \begin{array}{rcl}
    \mathcal{W}_k(\tau,\chi_k,\chi_{k-1})&:=&\dfrac{\tau}{T_k}\Lambda_k+V(\chi_{k}(\tau))+\mathcal{V}(\tau,\chi_k,T_k)+c_k,\\
  \end{array}
\end{equation}
where $$\Lambda_k=V(\chi_k(0))-V(\chi_{k-1}(T_{k-1})), $$ has a
derivative along the trajectories of system
$\dot{\chi}_k(\tau)=A\chi_k(\tau)$, $\tau\in[0,T_k]$
\begin{equation}\label{Th03}
\dfrac{d}{d\tau}\mathcal{W}_k(\tau,\chi_k,\chi_{k-1}):=\dfrac{1}{T_k}\Lambda_k+\dfrac{d}{d\tau}V(\chi_k(\tau))+\dfrac{d}{d\tau}\mathcal{V}(\tau,\chi_k,T_k)
\end{equation}
which is negative definite for all $\tau\in(0,T_k)$, $T_k\in[T_{min},T_{max}]$, $k\in\mathbb{N}$.
\end{description}

Then, the solutions of system (\ref{eq:mainsyst}) with known matrices $A$ and $J$ are asymptotically stable for any sequence of impulse time instants $\{t_k\}_{k\in\mathbb{N}}$ satisfying $t_{k+1}-t_k\in[T_{min},T_{max}]$, $k\in\mathbb{N}$.
\end{theorem}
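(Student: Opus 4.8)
\emph{Proof proposal.} The plan is to establish the chain $(ii)\Rightarrow(i)$, $(i)\Rightarrow(ii)$, and $(i)\Rightarrow$ asymptotic stability; together these give both the claimed equivalence and the conclusion of the theorem.

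For $(ii)\Rightarrow(i)$, I would fix $k$ and integrate the derivative (\ref{Th03}) of $\mathcal{W}_k(\cdot,\chi_k,\chi_{k-1})$ over $[0,T_k]$. The map $\tau\mapsto\mathcal{W}_k(\tau,\chi_k,\chi_{k-1})$ is continuous on $[0,T_k]$ and differentiable on $(0,T_k)$ — the terms $\tfrac{\tau}{T_k}\Lambda_k$ and $V(\chi_k(\tau))=\chi_k(0)^{T}e^{A^{T}\tau}Pe^{A\tau}\chi_k(0)$ (with $P\in\mathbb{S}_{++}^n$ the matrix of $V$) are smooth and $\mathcal{V}$ is differentiable in its first argument by definition — so the mean value theorem, together with negative definiteness of the derivative on $(0,T_k)$, yields $\mathcal{W}_k(T_k,\chi_k,\chi_{k-1})<\mathcal{W}_k(0,\chi_k,\chi_{k-1})$. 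I would then expand both endpoints: the constants $c_k$ cancel, the loop condition (\ref{Th02}) gives $\mathcal{V}(0,\chi_k,T_k)=\mathcal{V}(T_k,\chi_k,T_k)$ so those terms cancel too, and what remains, after inserting $\Lambda_k=V(\chi_k(0))-V(\chi_{k-1}(T_{k-1}))$, is exactly $V(\chi_k(T_k))-V(\chi_{k-1}(T_{k-1}))<0$ for every $k\ge1$, i.e. statement $(i)$.

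For $(i)\Rightarrow(ii)$ it is enough to exhibit one admissible looped-functional. I would take $\mathcal{V}(\tau,z,T):=-V(z(\tau))+\bigl(1-\tfrac{\tau}{T}\bigr)V(z(0))+\tfrac{\tau}{T}V(z(T))$, which is differentiable in $\tau$ and satisfies $\mathcal{V}(0,z,T)=\mathcal{V}(T,z,T)=0$, hence belongs to $\mathfrak{LF}([T_{min},T_{max}])$. Substituting it into (\ref{eq:functdsq}), the $\pm V(\chi_k(\tau))$ terms cancel and $\mathcal{W}_k$ collapses to the affine map $\tau\mapsto V(\chi_k(0))+c_k+\tfrac{\tau}{T_k}\bigl(V(\chi_k(T_k))-V(\chi_{k-1}(T_{k-1}))\bigr)$, whose (constant) derivative is negative precisely because $(i)$ holds; the degenerate index $k=0$ is harmless, e.g. with the convention $\Lambda_0=0$.

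For $(i)\Rightarrow$ asymptotic stability, set $w_k:=\chi_k(T_k)$, so that $w_k=e^{AT_k}J\,w_{k-1}$ with $w_0=e^{AT_0}x_0$. The function $(w,T)\mapsto V(e^{AT}Jw)/V(w)$ is continuous on the compact set $\{\,||w||_2=1\,\}\times[T_{min},T_{max}]$ — this is where $\mu_1>0$ and $T_{max}<+\infty$ are used — and is $<1$ there by $(i)$, hence bounded by some $\gamma<1$; by homogeneity of the quadratic form, $V(e^{AT}Jw)\le\gamma V(w)$ for all $w\in\mathbb{R}^n$ and all $T\in[T_{min},T_{max}]$. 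Iterating gives $V(w_k)\le\gamma^kV(w_0)$ and then, via (\ref{Th01}), $||w_k||_2\le\sqrt{\mu_2/\mu_1}\,\gamma^{k/2}||w_0||_2$. Since any $t\in(t_k,t_{k+1}]$ satisfies $x(t)=e^{A(t-t_k)}Jw_{k-1}$ with $t-t_k\in[0,T_{max}]$, one obtains $||x(t)||_2\le C\,||x_0||_2$ with $C$ independent of the sequence (Lyapunov stability: take $\delta_0=\delta/C$), while $t_k\to+\infty$ (no accumulation point) forces $||x(t)||_2\to0$ (attractivity). The only step that is more than bookkeeping is this last one — turning the pointwise decrease of $(i)$ into a uniform geometric rate — and that is exactly where compactness of $[T_{min},T_{max}]$ and $\mu_1>0$ become essential; everything else is routine once the loop condition and the definition of $\Lambda_k$ are used carefully.
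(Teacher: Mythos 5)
Your proof is correct, and for the two implications $(ii)\Rightarrow(i)$ and $(i)\Rightarrow(ii)$ it coincides with the paper's argument: the same integration of (\ref{Th03}) over $[0,T_k]$ combined with the loop condition (\ref{Th02}), and a looped-functional that differs from the paper's choice $\mathcal{V}(\tau,\chi_k,T_k)=-V(\chi_k(\tau))+\frac{\tau}{T_k}\left[V(\chi_k(T_k))-V(\chi_k(0))\right]$ only by the additive (in $\tau$) constant $V(\chi_k(0))$, hence with the identical collapsed derivative. Where you genuinely depart from the paper is the final step. The paper asserts that the decrease of $\{V(\chi_k(T_k))\}_{k}$ yields $||\chi_k(T_k)||_2\to0$ and then controls the inter-impulse behaviour through $\eta:=\sup_{s\in[T_{min},T_{max}]}\bar{\lambda}\left(J^Te^{A^Ts}Pe^{As}J\right)$; as written, the first claim is a leap, since a decreasing positive sequence need only converge to some nonnegative limit. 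Your compactness argument --- extracting a uniform contraction factor $\gamma<1$ for $(w,T)\mapsto V(e^{AT}Jw)/V(w)$ on $\{||w||_2=1\}\times[T_{min},T_{max}]$ from the reading of $(i)$ as a discrete-time Lyapunov inequality valid for all states and all admissible dwell-times --- supplies exactly the missing uniformity and gives geometric decay; the mild price is that you use this stronger (but clearly intended) reading of $(i)$ rather than decrease along a single trajectory. Your handling of Lyapunov stability (the explicit constant $C$ and the choice $\delta_0=\delta/C$) is likewise more complete than the paper's, which only discusses boundedness and convergence. In short: same skeleton for the equivalence, and a tighter, self-contained argument for the stability conclusion.
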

\begin{proof}
\textbf{Proof of (ii)$\Rightarrow$(i):} Let $k\in\mathbb N$, $\tau\in(0,T_k]$ and $T_k\in[T_{min},T_{max}]$. Assume that $(ii)$ is satisfied. Integrating (\ref{Th03}) over $[0,T_k]$ yields
$$\begin{array}{lcl}
\int_{0}^{T_k}\dfrac{d}{d\tau}\mathcal{W}_k(\tau,\chi_k,\chi_{k-1})d\tau&=&\mathcal{W}_k(T_k,\chi_k,\chi_{k-1})-\mathcal{W}_k(0,\chi_k,\chi_{k-1})\\
&=&V(\chi_k(0))-V(\chi_{k-1}(T_{k-1}))\\
&&+V(\chi_k(T_k))-V(\chi_k(0))\\
&&+\mathcal V(T_k,\chi_k,T_k)-\mathcal V(0,\chi_k,T_k).
\end{array}$$
The terms on the last row vanish according to (\ref{Th02}) and we get
$$\begin{array}{lcl}
\mathcal{W}_k(T_k,\chi_k,\chi_{k-1})-\mathcal{W}_k(0,\chi_k,\chi_{k-1})
&=&V(\chi_k(T_k))-V(\chi_{k-1}(T_{k-1})).
\end{array}$$
Then, the sequence $\{V(\chi_k(T_k))\}_{k\in\mathbb{N}}$ is decreasing over $k$ since (\ref{Th03}) is negative over $[0,T_k]$.\\

\textbf{Proof of (i)$\Rightarrow$(ii):}
Assume that $(i)$ is satisfied. Similarly as in \citep{Seuret:12}, introduce the functional $\mathcal{V}(\tau,\chi_k,T_k)=-V(\chi_{k}(\tau))+\dfrac{\tau}{T_k}\left[V(\chi_k(T_k))-V(\chi_k(0))\right]$. It is immediate to see that $\mathcal{V}\in\mathfrak{LF}([T_{min},T_{max}])$ since we have
\begin{equation}
  \begin{array}{lcl}
    \mathcal{V}(T_k,\chi_k,T_k)&=&-V(\chi_{k}(T_k))+(V(\chi_k(T_k)))-V(\chi_k(0))\\
    &=&-V(\chi_{k}(0))\\
    &=&\mathcal{V}(0,\chi_k,T_k)
  \end{array}
\end{equation}
for all $T_k\in[T_{min},T_{max}]$. Substitution of the proposed functional $\mathcal{V}$ into (\ref{Th03}) yields
$$\dfrac{d}{d\tau}\mathcal{W}_k(\tau,\chi_k,\chi_{k-1})=\dfrac{1}{T_k}\left[V(\chi_k(T_k))-V(\chi_{k-1}(T_{k-1}))\right]$$
and is negative by assumption. Equivalence is hence proved.\\

\textbf{Proof of asymptotic stability:}
It remains to prove that convergence of the discrete-time Lyapunov function implies boundedness and asymptotic convergence of the continuous-time Lyapunov function, or equivalently of the convergence of $||x(t)||_2$ to 0. From the discrete-time Lyapunov condition we have that $||\chi_k(T_k)||_2\to0$ as $k\to\infty$. Note also that
\begin{equation}
  \begin{array}{lcl}
 V(\chi_{k+1}(\tau)) &=& \chi_{k+1}(\tau)^TP\chi_{k+1}(\tau)\\
                 &=& \chi_{k}(T_{k})^TJ^Te^{A^T\tau}Pe^{A\tau}J\chi_k(T_k)\\
                 &\le&\eta||\chi_k(T_k)||_2^2\\
                 &\le&\eta\mu_1^{-1}V(\chi_k(T_k))
  \end{array}
\end{equation}
where $\eta:=\sup_{s\in[T_{min},T_{max}]}\bar{\lambda}\left(J^Te^{A^Ts}Pe^{As}J\right)$ and $\bar{\lambda}$ is the largest eigenvalue. Note that $T_{max}$ may be allowed to be unbounded provided that $A$ is Hurwitz since, in that case, $\eta$ would be bounded. Finally, from the boundedness and convergence of $V(\chi_k(T_k))$, we have the boundedness and convergence of $V(x(t))$ and $||x(t)||_2$. This completes the proof.

%
%
\end{proof}

The interest for considering discrete-time Lyapunov functions lies in
the potential use of non-monotonic continuous-time Lyapunov functions (along the trajectories of the system) that are
impossible to consider via the usual Lyapunov Theorem. Indeed,
despite being non-monotonic, Lyapunov functions may asymptotically
tend to 0, and thus contain information on the asymptotic stability
of the system. This feature is extremely important in the current
framework in order to cope with expansive jumps and unstable
continuous-time dynamics. Using such a discrete-time approach, only
the decreasing of the function evaluated at instants
$\{t_k\}_{k\in\mathbb{N}}$ is important. In Fig. \ref{fig:loc2}, we
may see that the two envelopes, generated by the pre-impulses and
post-impulses values of the continuous-time Lyapunov function,
characterize asymptotic stability. The functional $\mathcal{W}$ of
Theorem \ref{Th0} coincides with the lower envelope and a specific
sequence $\{c_k\}_{k\in\mathbb{N}}$, making the envelope absolutely
continuous. Note that the lower and upper envelope are equivalent in
terms of stability measure since they are related through the
equality $V(\chi_k(0))=V(J\chi_{k-1}(T_{k-1}))$.

\begin{figure}[h]
\centering
\includegraphics[width=0.65\textwidth]{./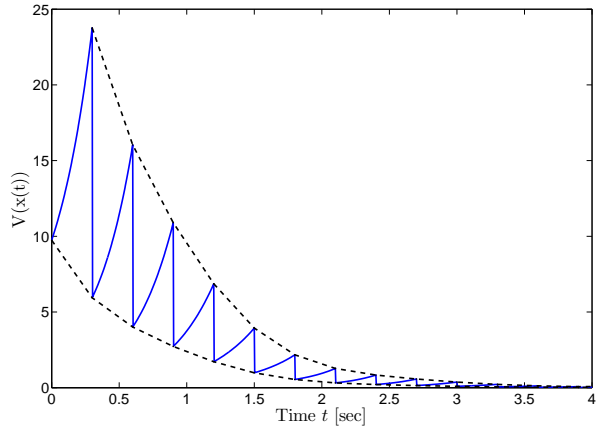}
  \caption{Continuous-time Lyapunov function $V(x)=x^TPx$ (plain) for system (\ref{eq:ex1}) and the discrete-time envelopes (dashed); $\mathcal{W}$ coincides with the monotonically decreasing lower envelope.}\label{fig:loc2}
\end{figure}

According to the choice of the sequence $\{c_k\}_{k\in\mathbb{N}}$,
the functional $\mathcal{W}$ can be discontinuous. This, however, is
not a problem since the functional decrease must hold over the
intervals $(t_k,t_{k+1})$. It is important to keep in mind that only
the integral of the derivative over $(t_k,t_{k+1})$ is important
since it coincides with a pointwise decrease of the Lyapunov function
at impulse instants.

It will be shown that such an approach is able to characterize stability for a larger class of systems than existing methods \citep{Cai:08,Hespanha:08}, more precisely, systems for which neither $A$ nor $J$ are stable. The approach also readily extends to the case of uncertain systems for which efficient results are needed.

\section{Nominal stability analysis of linear impulsive systems}\label{sec:nom}

This section provides some nominal stability results for both periodic and aperiodic impulses. Therefore, we tacitly assume in this section that the sets $\mathcal{A}$ and $\mathcal{J}$ are reduced to the singletons $\{A\}$ and $\{J\}$, respectively. In the following, we will make extensive uses of the matrix expressions:
\begin{equation}
\begin{array}{rcl}
  \mathscr{C}(P,A)&:=&A^TP+PA,\\
  \mathscr{D}(P,J)&:=&J^TPJ-P,\\
  \mathscr{I}(P,A,J,T)&:=&J^Te^{A^TT}Pe^{AT}J-P,
\end{array}
\end{equation}
where $\mathscr{C}$, $\mathscr{D}$ and $\mathscr{I}$ stand for 'continuous', 'discrete' and 'impulsive', respectively.

\subsection{Stability analysis of linear impulsive systems - Periodic impulses case}

The simple case of periodic linear impulsive systems is considered first to set up ideas and key results on minimal dwell-time, maximal-dwell-time, arbitrary impulses and ranged dwell-time. In the periodic impulse case, the LTI continuous-time impulsive system (\ref{eq:mainsyst}) can be converted into the LTI discrete-time system
\begin{equation}\label{eq:dt}
  x(t_{k+1})=e^{AT}Jx(t_k),
\end{equation}
the asymptotic stability of which is equivalent to the asymptotic stability of (\ref{eq:mainsyst}). This is formalized in the following lemma:
\begin{lemma}[Periodic impulses]\label{lem:periodic}
   Assume $T>0$, then the following statements are equivalent:
   \begin{enumerate}
     \item[a)] The LTI continuous-time impulsive system (\ref{eq:mainsyst}) with $T$-periodic impulses is asymptotically stable.
     \item[b)] The LTI discrete-time system (\ref{eq:dt}) is asymptotically stable.
     \item[c)] There exists a matrix $P=P^T\succ0$ such that
  \begin{equation}\label{eq:lolperiod}
    \mathscr{I}(P,A,J,T)\prec0.
  \end{equation}
   \end{enumerate}
\end{lemma}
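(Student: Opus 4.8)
The plan is to prove the chain (a)$\Leftrightarrow$(b)$\Leftrightarrow$(c) by treating the two equivalences independently: (b)$\Leftrightarrow$(c) is nothing but the classical discrete-time Lyapunov theorem applied to the monodromy matrix $M:=e^{AT}J$, while (a)$\Leftrightarrow$(b) follows from the lifting correspondence of Section~\ref{sec:prel} together with the compactness of the fixed interval $[0,T]$.

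For (b)$\Leftrightarrow$(c), I would first observe that system (\ref{eq:dt}) reads $x(t_{k+1})=Mx(t_k)$, so it is asymptotically stable if and only if $\rho(M)<1$, and that $\mathscr{I}(P,A,J,T)=M^TPM-P$. The implication (c)$\Rightarrow$(b) is then immediate: if $P=P^T\succ0$ satisfies $M^TPM-P\prec0$, the quadratic form $V(x)=x^TPx$ is positive definite, radially unbounded and strictly decreasing along every nonzero trajectory of $x(t_{k+1})=Mx(t_k)$, hence $x(t_k)\to0$. For the converse (b)$\Rightarrow$(c), I would use the standard construction: pick any $Q=Q^T\succ0$ and set $P:=\sum_{j\ge0}(M^T)^jQM^j$; the series converges because $\rho(M)<1$ yields a geometric bound $||M^j||_2\le\kappa\gamma^j$ with $\gamma<1$, the resulting $P$ is symmetric positive definite, and a telescoping computation gives $M^TPM-P=-Q\prec0$, which is (\ref{eq:lolperiod}).

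For (a)$\Leftrightarrow$(b), recall that in the $T$-periodic case $\mathbb{I}=\{t_0+kT\}_{k\in\mathbb{N}}$ and, by the lifting identities, $\chi_k(\tau)=e^{A\tau}\chi_k(0)$ on $[0,T]$ with $\chi_k(0)=Jx(t_k)$, so the discrete sequence $\{x(t_k)\}_{k\in\mathbb{N}}$ of (\ref{eq:dt}) and the functional sequence $\{\chi_k\}_{k\in\mathbb{N}}$ determine each other through $\chi_k(\tau)=e^{A\tau}Jx(t_k)$. Since $[0,T]$ is compact, $\kappa:=\sup_{\tau\in[0,T]}||e^{A\tau}||_2<\infty$, whence $||\chi_k(0)||_2\le\sup_{s\in[0,T]}||\chi_k(s)||_2\le\kappa||\chi_k(0)||_2$ for all $k$; combined with the invertibility of $e^{-AT}$ relating $\chi_k(0)$ to $\chi_k(T)=x(t_{k+1})$, this two-sided estimate shows that $\sup_{s\in[0,T]}||\chi_k(s)||_2\to0$ if and only if $||x(t_k)||_2\to0$, and it transfers the stability (boundedness) bound as well. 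This is exactly the argument used in the last part of the proof of Theorem~\ref{Th0}, specialised to a fixed period; alternatively, (c)$\Rightarrow$(a) can be obtained directly by applying Theorem~\ref{Th0} with $T_{min}=T_{max}=T$, since under (\ref{eq:lolperiod}) the sequence $\{V(\chi_k(T))\}_{k\in\mathbb{N}}$ is decreasing. Together with (b)$\Leftrightarrow$(c) this closes the equivalence.

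The only genuinely non-routine ingredient I expect is the converse Lyapunov step (b)$\Rightarrow$(c), which hinges on the convergence of the Lyapunov series under $\rho(M)<1$; the remaining implications are bookkeeping with the lifting and the usual forward Lyapunov argument, so I anticipate no real obstacle there.
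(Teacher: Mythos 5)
Your proof is correct. The paper actually states Lemma~\ref{lem:periodic} without any proof, treating it as a classical fact, and your argument --- the discrete-time Lyapunov theorem for the monodromy matrix $M=e^{AT}J$ (with the series construction $P=\sum_{j\ge0}(M^T)^jQM^j$ for the converse) combined with the lifting/compactness bound $\sup_{\tau\in[0,T]}\|e^{A\tau}\|<\infty$ to pass between the continuous and discrete notions of stability --- is exactly the standard reasoning the authors implicitly rely on (and the same inter-sample bound appears in the last part of their proof of Theorem~\ref{Th0}).
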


The main drawback of the above results lies in the presence of exponential terms in LMI (\ref{eq:lolperiod}) preventing any extension to robust dwell-time characterization, essentially due to the difficulty of considering uncertainties at the exponential. Note also that the LMI (\ref{eq:lolperiod}) is difficult to check when $T$ belongs to a (possibly infinite) interval, as this is the case for certain dwell-time notions.

Below, we show that Theorem \ref{Th0} may be used to obtain sufficient conditions for the asymptotic stability of system (\ref{eq:mainsyst}) in the case of periodic impulses. The same result turns out to be useful for deriving alternative dwell-time results and generalizing them to uncertain systems by resolving the exponential-related tractability problem mentioned above.
\begin{theorem}\label{th:imp:nominal}
  The impulsive system (\ref{eq:mainsyst}) with $T$-periodic impulses is asymptotically stable if there exist matrices $P,Z\in\mathbb{S}_{++}^n$, $Q,U\in\mathbb{S}^n$, $R\in\mathbb{R}^{n\times n}$ and $N\in\mathbb{R}^{n\times 2n}$ such that the LMIs
\begin{equation}\label{eq:th1}
\begin{array}{lcccl}
  \Psi(T)&:=&F_0+T(F_2+F_3)&\prec&0\\
  \Phi(T)&:=&\begin{bmatrix}
    F_0-TF_3 & N^T\\
    \star & -\frac{Z}{T}
  \end{bmatrix}&\prec&0
  \end{array}
\end{equation}
hold with $M_x=\begin{bmatrix}
  I & 0
\end{bmatrix}$, $M_\zeta=\begin{bmatrix}
  I & -J
\end{bmatrix}$, $M_-=\begin{bmatrix}
  0 & I
\end{bmatrix}$, $F_3=M_-^TJ^TUJM_-$ and
\begin{equation}
  \begin{array}{lcl}
    F_0&=&TM_x^T(A^TP+PA)M_x-M_\zeta^TQM_\zeta\\
    &&+M_-^T(J^TPJ-P)M_-+\He[N^TM_\zeta-M_\zeta^TRM_x]\\
    F_2&=&\He[M_x^TA^TQM_\zeta+M_x^TA^TRM_x+M_\zeta^TRAM_x]\\
    &&+M_x^TA^TZAM_x.
  \end{array}
\end{equation}
Moreover, the quadratic form $V(x)=x^TPx$ is a discrete-time Lyapunov function for system (\ref{eq:mainsyst}), that is the LMI $\mathscr{I}(P,A,J,T)\prec0$ holds, implying then the satisfaction of the conditions of Lemma \ref{lem:periodic}.
\end{theorem}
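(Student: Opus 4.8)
The plan is to deduce the result from Theorem \ref{Th0} applied in the degenerate periodic case $T_{min}=T_{max}=T$, by constructing an explicit looped-functional out of the decision variables $P,Z,Q,U,R,N$ and showing that the two LMIs in (\ref{eq:th1}) are exactly the two ``endpoints'' of the negativity condition (\ref{Th03}).

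First I would set $V(x)=x^TPx$ and, along any impulse sequence with $t_{k+1}-t_k=T$, work with the augmented vector $\xi_k(\tau):=\mathrm{col}(\chi_k(\tau),\chi_{k-1}(T))$, for which $M_x\xi_k(\tau)=\chi_k(\tau)$, $M_-\xi_k(\tau)=\chi_{k-1}(T)$ and, using $\chi_k(0)=J\chi_{k-1}(T)$, $M_\zeta\xi_k(\tau)=\chi_k(\tau)-\chi_k(0)$. Then $\Lambda_k=(M_-\xi_k)^T\mathscr{D}(P,J)(M_-\xi_k)$ and, along $\dot\chi_k=A\chi_k$, $\tfrac{d}{d\tau}V(\chi_k(\tau))=(M_x\xi_k)^T\mathscr{C}(P,A)(M_x\xi_k)$. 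Next I would introduce the candidate looped-functional: a sum of $(T-\tau)$-weighted quadratic forms in $\chi_k(\tau)$ and $\chi_k(0)$ (carrying $Q$ and $R$), a $\tau(T-\tau)$-weighted quadratic form in $\chi_k(0)=J\chi_{k-1}(T)$ (carrying $U$, hence the $F_3$ term), and a $(T-\tau)$-weighted term $\int_0^\tau\dot\chi_k(s)^TZ\dot\chi_k(s)\,ds$. Since every summand carries a factor vanishing at both $\tau=0$ and $\tau=T$, the looped condition (\ref{Th02}) holds with both sides zero, and differentiability in $\tau$ is immediate, so $\mathcal{V}\in\mathfrak{LF}([T,T])$.

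The computational core is then to differentiate $\mathcal{W}_k$ along $\dot\chi_k=A\chi_k$, group the non-integral terms into $\tfrac1T\,\xi_k(\tau)^T\big(F_0+\tau(F_2+F_3)\big)\xi_k(\tau)$ using the identities above, and handle the remaining integral contribution $2(N\xi_k(\tau))^T\!\int_0^\tau\dot\chi_k(s)\,ds-\int_0^\tau\dot\chi_k(s)^TZ\dot\chi_k(s)\,ds$ (the first term coming from the $\He[N^TM_\zeta]$ slack in $F_0$ together with $M_\zeta\xi_k(\tau)=\int_0^\tau\dot\chi_k\,ds$). A pointwise completion of squares in $s$ — equivalently Jensen's inequality plus a Schur complement in $(N,Z)$ — bounds this above by a term proportional to $(N\xi_k(\tau))^TZ^{-1}(N\xi_k(\tau))$, legitimate since $Z\succ0$ and $\chi_k\in C^1([0,T])$; after arranging the $(T-\tau)$ scalings, (\ref{Th03}) is bounded above by $\tfrac1T\,\xi_k(\tau)^T\big[(1-\tfrac\tau T)(F_0-TF_3+TN^TZ^{-1}N)+\tfrac\tau T(F_0+T(F_2+F_3))\big]\xi_k(\tau)$. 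By the Schur complement lemma, $F_0-TF_3+TN^TZ^{-1}N\prec0$ is equivalent to $\Phi(T)\prec0$, while $F_0+T(F_2+F_3)=\Psi(T)\prec0$; a convex combination of two negative definite matrices is negative definite, so (\ref{Th03}) is negative definite for all $\tau\in(0,T)$. Hence statement (ii) of Theorem \ref{Th0} holds, therefore so does (i), so $\{V(\chi_k(T))\}_{k\in\mathbb{N}}$ is strictly decreasing and the system is asymptotically stable.

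For the ``moreover'' part, statement (i) of Theorem \ref{Th0} is precisely the assertion that $V(x)=x^TPx$ decreases strictly along $x\mapsto e^{AT}Jx$, i.e. $x^T(J^Te^{A^TT}Pe^{AT}J-P)x<0$ for all $x\neq0$, that is $\mathscr{I}(P,A,J,T)\prec0$; Lemma \ref{lem:periodic}(c) then yields (a) and (b). The main obstacle I anticipate is the middle algebra: choosing the coefficients and signs inside $\mathcal{V}$ so that the differentiated functional collapses exactly onto the convex combination $(1-\tfrac\tau T)\cdot(\text{Schur form of }\Phi(T))+\tfrac\tau T\cdot\Psi(T)$ — in particular making the $U$/$F_3$ contribution flip sign between the two endpoints and making the $Z$/$N$ integral terms close into $N^TZ^{-1}N$ as $\tau\to0$. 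The remaining ingredients (the looped boundary identity, differentiability, Jensen/Schur, the convexity conclusion, and the appeal to Lemma \ref{lem:periodic}) are routine.
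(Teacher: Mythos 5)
Your proposal is correct and follows essentially the same route as the paper's proof: the same choice of looped-functional (the $(T-\tau)/T$-weighted $Q,R$ and integral terms plus the $\tau(T-\tau)/T$-weighted $U$ term), the same appeal to Theorem \ref{Th0}, the affine Jensen bound with slack $N$, a Schur complement on $TN^TZ^{-1}N$, and affinity in $\tau$ reducing negativity on $(0,T)$ to the two endpoint LMIs $\Psi(T)\prec0$ and $\Phi(T)\prec0$. The only (immaterial) discrepancy is the orientation of your convex combination: the derivative bound $F_0+\tau N^TZ^{-1}N+(T-\tau)F_2+(T-2\tau)F_3$ yields $\Psi(T)$ at $\tau=0$ and the Schur form of $\Phi(T)$ at $\tau=T$ (the $\tau N^TZ^{-1}N$ term is maximal at $\tau=T$, not as $\tau\to0$), whereas you swap the two endpoints; since both must be negative definite anyway, the argument is unaffected.
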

\begin{proof}
The proof is inspired from \citep{Seuret:12}. Choosing $V(x)=x^TPx$ and
\begin{equation}\label{eq:calV}
\begin{array}{lcl}
    \mathcal{V}(\tau,\chi_k,T)&=&\dfrac{(T-\tau)}{T}\zeta_k(\tau)^T\left[Q\zeta_k(\tau)+2R\chi_k(\tau)\right]\\
    &&+\dfrac{(T-\tau)}{T}\int_{0}^{\tau}\dot{\chi}_k(s)^TZ\dot{\chi}_k(s)ds\\
    &&+\dfrac{\tau(T-\tau)}{T}\chi_{k}(0)^TU\chi_{k}(0)
\end{array}
\end{equation}
where $\zeta_k(\tau)=\chi_k(\tau)-\chi_k(0)$, $\dot{\chi}_k(\tau)=A\chi_k(\tau)$, $P\in\mathbb{S}_{++}^n,Z\in\mathbb{S}_{++}^n,Q\in\mathbb{S}^n,U\in\mathbb{S}^n$ and $R\in\mathbb{R}^{n\times n}$. The condition (\ref{Th02}) is verified since
\begin{equation}
  \begin{array}{lclclcl}
    \mathcal{V}(0,z,T)&=&\mathcal{V}(T,z,T)&=&0
  \end{array}
\end{equation}
for all $z\in C([0,T],\mathbb{R}^n)$. Thus, according to Theorem \ref{Th0}, the functional $\mathcal{W}_k$ defined in (\ref{eq:functdsq}) must be considered. Its derivative (\ref{Th03})  taken along the trajectories of the system is bounded from above by the quadratic form
\begin{equation}
\begin{array}{lcl}
  \dot{\mathcal{W}}_k&\le&\dfrac{1}{T}\xi_k(\tau)^T\left[F_0+\tau F_1+(T-\tau)F_2 +(T-2\tau)F_3\right]\xi_k(\tau)
\end{array}
\end{equation}
where $F_1=N^TZ^{-1}N$ and $\xi_k(t)=\col(\chi_k(\tau),\chi_{k-1}(T))$. The above bound on $\dot{\mathcal{W}}_k$ has been obtained using the affine Jensen's bound \citep{Seuret:09b,Briat:11b} on the integral term
\begin{equation}\label{eq:affjen}
\begin{array}{lcl}
    -\int_{0}^{\tau}\dot{\chi}_k(s)^TZ\dot{\chi}_k(s)(s)ds&\le&\xi_k(\tau)^T\left(2N^TM_\zeta+\tau N^TZ^{-1}N\right)\xi_k(\tau)
\end{array}
\end{equation}
defined for some $N\in\mathbb{R}^{n\times 2n}$. This bound is known to be more precise and relevant than the rational Jensen's inequality when applied on integrals with uncertain/varying integration bounds \cite{Briat:11b}.
Hence, the system (\ref{eq:mainsyst}) with periodic impulses is
asymptotically stable if $\dot{\mathcal{W}}_k$ is negative definite
over $\tau\in(0,T)$, or equivalently, if the LMI
$$F_0+\tau F_1+(T-\tau)F_2+(T-2\tau)F_3\prec0$$ holds for all $\tau\in[0,T]$. Since this LMI is affine in $\tau$ (hence convex), to check its negative definiteness over the entire interval $(0,T]$, it is necessary and sufficient to check it at the vertices of the set, that is only over the \emph{finite set} $\tau\in\{0,T\}$. A Schur complement on the quadratic term $TN^T Z^{-1} N$ then yields the result.
\end{proof}

\begin{remark}[Note on the affine Jensen's bound]
  By eliminating the slack-matrix $N$ on the RHS of (\ref{eq:affjen}) using the Finsler's lemma \citep{SkeltonIG:97a} the usual Jensen's bound is recovered, showing then their equivalence. However, as pointed out in \cite{Briat:11b}, they are not equivalent from a computational point of view. Indeed, the rational Jensen's bound is nonconvex in the measure of the integration support (here $\tau$) and must be bounded by its maximal value, i.e. $T$, resulting then in a conservatism increase and a loss of equivalence. On the other hand, the affine Jensen's bound is affine in the measure of the integration support (hence convex) and does not need to be overbounded in order to make the conditions tractable.
\end{remark}

It is important to stress that while Theorem \ref{Th0} provides a necessary and sufficient condition, the above result provides a sufficient one only. By indeed choosing the specific functional (\ref{eq:calV}), necessity is destroyed. Despite of that, it will be shown in the examples that such a functional may yield interesting results. To anticipate a little bit, the functional-based formulation will also be shown in Section \ref{sec:rob} to be suitable for robust stability analysis. All these characteristics emphasize the relevance of the proposed approach inheriting part of the efficiency
of the usual discrete-time Lyapunov approach.

We digress here a little bit to address the case of stability under small periods, i.e. the case $T\to0$, but excluding any Zeno behavior.
\begin{lemma}\label{lem:T0}
  When $T\to0$, then the LMI conditions (\ref{eq:th1}) tend to the condition $\mathscr{D}(P,J)\prec0$, equivalent to the Schurness of $J$. This is consistent with the fact that $\rho(Je^{AT})\to\rho(J)$ as $T\to0$.
\end{lemma}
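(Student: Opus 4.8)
The plan is to take the limit $T \to 0^+$ directly in the LMIs $\Psi(T) \prec 0$ and $\Phi(T) \prec 0$ of \eqref{eq:th1}, tracking which terms survive and which vanish, and then to identify the limiting condition. First I would substitute $T=0$ into the expression for $F_0$: the term $TM_x^T(A^TP+PA)M_x$ disappears, and what remains is $-M_\zeta^TQM_\zeta + M_-^T(J^TPJ-P)M_- + \He[N^TM_\zeta - M_\zeta^TRM_x]$. Similarly $\Psi(T) = F_0 + T(F_2+F_3) \to F_0|_{T=0}$ and the upper-left block of $\Phi(T)$, namely $F_0 - TF_3$, also tends to $F_0|_{T=0}$. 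The subtlety is the $(2,2)$ block $-Z/T$ of $\Phi(T)$, which blows up to $-\infty$ as $T\to 0^+$; I would argue that in this limit the Schur complement structure forces, for $\Phi(T)\prec0$ to be feasible, essentially that $F_0|_{T=0} - (N^T)(Z/T)^{-1}N = F_0|_{T=0} - \tfrac{T}{1}N^TZ^{-1}N \to F_0|_{T=0} \prec 0$, so both LMIs collapse to the single condition $F_0|_{T=0}\prec 0$.

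Next I would show that $F_0|_{T=0}\prec0$ is equivalent to $\mathscr{D}(P,J) = J^TPJ - P \prec 0$. The direction that matters for the lemma is that feasibility of the $T=0$ LMI implies $\mathscr{D}(P,J)\prec0$; this I would get by a congruence/projection argument. Note $M_\zeta = \begin{bmatrix} I & -J\end{bmatrix}$ and $M_- = \begin{bmatrix} 0 & I \end{bmatrix}$, so the vector $v = \col(Jx, x)$ satisfies $M_\zeta v = Jx - Jx = 0$ and $M_- v = x$ and $M_x v = Jx$. Pre- and post-multiplying $F_0|_{T=0}$ by $v^T$ and $v$ kills every term containing $M_\zeta$ as an outer factor — that is, $-M_\zeta^TQM_\zeta$, $N^TM_\zeta$, and $M_\zeta^TRM_x$ all annihilate — leaving exactly $x^T(J^TPJ-P)x < 0$. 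Conversely, if $\mathscr{D}(P,J)\prec0$ one can choose $Q$, $R$, $N$, $U$ appropriately (e.g. exploiting the freedom in the slack variables, taking $Q$, $R$, $N$ large enough in the directions transverse to $\ker M_\zeta$) to make $F_0|_{T=0}\prec0$; this is the standard slack-variable completion argument and I would only sketch it.

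Finally I would tie this to the spectral statement: $\mathscr{D}(P,J)\prec0$ for some $P\succ0$ is the classical discrete-time Lyapunov inequality, equivalent to $\rho(J)<1$ (Schurness of $J$), and this is consistent with $\rho(Je^{AT}) \to \rho(Je^{0}) = \rho(J)$ as $T\to 0$ by continuity of the spectral radius, matching Lemma \ref{lem:periodic}. The main obstacle I anticipate is making rigorous the claim that the $-Z/T$ block "disappears correctly" in the limit — i.e. that no feasibility is lost or spuriously gained as $T\to0^+$ — since one is taking a limit of a parametrized family of LMIs rather than evaluating at an interior point; I would handle this by working with the Schur complement form explicitly ($\Phi(T)\prec0 \iff F_0 - TF_3 + T N^TZ^{-1}N \prec 0$ together with $Z\succ0$) so that every term is manifestly affine or linear in $T$ and the $T\to0$ limit is transparent, giving $F_0|_{T=0}\prec0$ with no pathology.
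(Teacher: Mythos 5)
Your proposal is correct and follows essentially the same route as the paper: evaluate $F_0$ at $T=0$, observe both $\Psi$ and $\Phi$ collapse to $F_0|_{T=0}\prec0$, and eliminate the slack variables to recover $J^TPJ-P\prec0$. Your explicit congruence with $v=\col(Jx,x)$ (the kernel of $M_\zeta$) and the sketched slack-variable completion are exactly the two directions of the Finsler elimination that the paper invokes in one step with $M_\bot=\begin{bmatrix}J^T & I\end{bmatrix}^T$; your extra care with the $-Z/T$ block via the Schur complement is a welcome tightening of a step the paper glosses over, but does not change the argument.
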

\begin{proof}
 When $T\to0$, then we have $\Psi(T)\to\left.F_0\right|_{T=0}$ and $\Phi(T)\to\left.F_0\right|_{T=0}$ where $\left.F_0\right|_{T=0}$ is given by
\begin{equation}
\begin{array}{l}
  -M_\zeta^TQM_\zeta-M_\zeta^TRM_x-M_x^TR^TM_\zeta\\
  \qquad+N^TM_\zeta+M_\zeta^TN+M_-^T(J^TPJ-P)M_-
\end{array}
\end{equation}
and should be negative definite. By virtue of the Finsler's Lemma \citep{SkeltonIG:97a}, the matrix $N$ can be eliminated and we obtain the equivalent LMI
\begin{equation}
  M_\bot^TM_-^T(J^TPJ-P)M_-M_\bot\prec0
\end{equation}
where $M_\bot=\begin{bmatrix}
  J^T&I
\end{bmatrix}^T$. Evaluation of the expression yields the result.
\end{proof}

It is important to stress that $T\to0$ does not necessarily means that an accumulation point exists. The limit in the small period allows one to study the stability for very small periods that are still different from 0. Let us consider for instance the unbounded sequence of impulse times given by $t_k=\log(k+1)$, $k\in\mathbb{N}$. The dwell-times are then given by $T_k:=t_{k+1}-t_k=\log(1+1/(k+1))$ and tends to 0 as $k$ goes to infinity. In this case, stability of the system must be ensured for arbitrarily small positive dwell-times.

Thus, from Lemma \ref{lem:T0} and by virtue of the continuity of eigenvalues with respect to matrix parameters, the condition $\rho(J)<1$ implies $\rho(e^{AT}J)<1$ in a sufficiently small positive neighborhood of $T=0$. This result will turn out to be very useful in Section \ref{sec:ranged} on the ranged dwell-time.

\subsection{Minimal dwell-time characterization}

Several results characterizing stability with minimal dwell-time and arbitrary impulse sequences are provided below. The two results considering the minimal-dwell-time are obtained using similar arguments as in \cite{Geromel:06b}, and using Theorem \ref{Th0}, respectively. The case of arbitrary impulse sequences is a particular case of the first result.

\begin{lemma}[Minimal Dwell-Time]\label{lem:minDT}
  Assume that for some given $T>0$, there exists a matrix $P=P^T\succ0$ such that the LMIs
  \begin{equation}\label{eq:lolilol1}
    \mathscr{C}(P,A)\prec0
  \end{equation}
  and
  \begin{equation}\label{eq:lolilol2}
    \mathscr{I}(P,A,J,T)\prec0
  \end{equation}
  hold. Then, for any impulse sequence $\{t_k\}_{k\in\mathbb{N}}$ satisfying $t_{k+1}-t_k\ge T$, the system (\ref{eq:mainsyst}) is asymptotically stable.
\end{lemma}
\begin{proof}
The goal is to show that the conditions (\ref{eq:lolilol1}) and (\ref{eq:lolilol2}) implies that $ \mathscr{I}(P,A,J,s)\prec0$ for all $s\ge T$. To this aim, we use the fact that the eigenvalues of $J^Te^{A^Ts}Pe^{As}J$ are nonincreasing as $s>0$ increases.
To show this, let us consider the Lyapunov function evaluated at $t_k+s$, $s\ge0$ given by
  \begin{equation}
         V(x(t_k+s))=x(t_k)^TJ^Te^{A^Ts}Pe^{As}Jx(t_k)
  \end{equation}
  whose derivative with respect to $s$ is given by
  \begin{equation}
     \dfrac{d}{ds}V(x(t_k+s))=x(t_k)^TJ^Te^{A^Ts}\mathscr{C}(P,A)e^{As}Jx(t_k).
  \end{equation}
By virtue of the condition (\ref{eq:lolilol1}), the derivative is negative semidefinite for all $s\ge0$ and hence the eigenvalues of $J^Te^{A^Ts}Pe^{As}J$ are nonincreasing as $s$ increases. We then have the inequality
$$J^Te^{A^Ts}Pe^{As}J\preceq J^Te^{A^TT}Pe^{AT}J$$
for all $s\ge T$. Using now the condition (\ref{eq:lolilol2}) this implies that we have
\begin{equation}
  \mathscr{I}(P,A,J,s)\preceq \mathscr{I}(P,A,J,T)\prec0
\end{equation}
for all $s\ge T$. The proof is complete.
\end{proof}

Similarly as in the periodic case, we derive below an affine counterpart of the above result, which will be useful for the minimal dwell-time analysis of uncertain systems.
\begin{theorem}[Minimal Dwell-Time]\label{th:minDT}
  Assume that for some given $T>0$, there exist matrices $P,Z\in\mathbb{S}^n_{++}$, $U,Q\in\mathbb{S}^n$, $N\in\mathbb{R}^{n\times 2n}$ and $R\in\mathbb{R}^{n\times n}$ such that the LMIs $\Psi(T)\prec0$, $\Phi(T)\prec0$ and $\mathscr{C}(P,A)\prec0$ hold. 

Then, for any impulse sequence $\{t_k\}_{k\in\mathbb{N}}$ satisfying $t_{k+1}-t_k\ge T$, the system (\ref{eq:mainsyst}) is asymptotically stable and Lemma \ref{lem:minDT} holds with the same matrix $P$.
\end{theorem}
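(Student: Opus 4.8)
The plan is to reduce Theorem~\ref{th:minDT} to Lemma~\ref{lem:minDT}, which already delivers asymptotic stability under minimal dwell-time once we have the two matrix inequalities $\mathscr{C}(P,A)\prec0$ and $\mathscr{I}(P,A,J,T)\prec0$. The hypothesis directly supplies $\mathscr{C}(P,A)\prec0$, so the only thing left to establish is that the pair of LMIs $\Psi(T)\prec0$, $\Phi(T)\prec0$ — which are exactly the conditions of Theorem~\ref{th:imp:nominal} for period $T$ — imply $\mathscr{I}(P,A,J,T)\prec0$ with the \emph{same} matrix $P$. But this is precisely the ``moreover'' part of Theorem~\ref{th:imp:nominal}: that result asserts that whenever $\Psi(T)\prec0$ and $\Phi(T)\prec0$ hold, the quadratic form $V(x)=x^TPx$ is a discrete-time Lyapunov function for the lifted map $x(t_{k+1})=e^{AT}Jx(t_k)$, i.e.\ $\mathscr{I}(P,A,J,T)\prec0$. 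So the first step is simply to invoke Theorem~\ref{th:imp:nominal} to extract $\mathscr{I}(P,A,J,T)\prec0$.

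The second step is to feed $\mathscr{C}(P,A)\prec0$ and $\mathscr{I}(P,A,J,T)\prec0$ into Lemma~\ref{lem:minDT}. That lemma's argument shows that $\mathscr{C}(P,A)\prec0$ forces the eigenvalues of $J^Te^{A^Ts}Pe^{As}J$ to be nonincreasing in $s\ge0$, hence $\mathscr{I}(P,A,J,s)\preceq\mathscr{I}(P,A,J,T)\prec0$ for all $s\ge T$; asymptotic stability then follows for every impulse sequence with $t_{k+1}-t_k\ge T$. This also establishes the final clause of the theorem statement, namely that Lemma~\ref{lem:minDT} holds with the same $P$.

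There is essentially no obstacle here: the theorem is a packaging of two previously established facts, and the work has already been done in the proofs of Theorem~\ref{th:imp:nominal} and Lemma~\ref{lem:minDT}. The only point requiring a word of care is to verify that the matrix $P$ appearing in the LMIs $\Psi(T)\prec0$, $\Phi(T)\prec0$ is the very same $P$ appearing in $\mathscr{C}(P,A)\prec0$ — which it is, since all three LMIs are posed simultaneously in the hypothesis over the same decision variables — so that Lemma~\ref{lem:minDT} can indeed be applied with a single common $P$. Beyond that, the proof is a two-line chain of implications.
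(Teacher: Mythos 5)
Your proposal is correct and follows exactly the paper's own (one-line) proof: invoke the ``moreover'' clause of Theorem~\ref{th:imp:nominal} to obtain $\mathscr{I}(P,A,J,T)\prec0$ with the same $P$, then apply Lemma~\ref{lem:minDT} together with $\mathscr{C}(P,A)\prec0$. You have simply spelled out the chain of implications that the paper leaves implicit.
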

\begin{proof}
  The proof follows from Lemma \ref{lem:minDT} and Theorem \ref{th:imp:nominal}.
\end{proof}

Note that when deriving Lemma \ref{lem:minDT}, there is no restriction in letting $T\to0$, leading then to a possible characterization of the stability for arbitrarily small dwell-times. Lemma \ref{lem:T0} says that if there exists $P\in\mathbb{S}_{++}^n$ such that $\mathscr{D}(J,P)\prec0$ holds, then asymptotic stability is guaranteed for sufficiently small dwell-times. The following corollary is a consequence of this fact and the maximal dwell-time result of Lemma \ref{lem:minDT}:
\begin{corollary}[Arbitrary impulses sequence]\label{cor:arb}
  The system (\ref{eq:mainsyst}) is asymptotically stable for arbitrary impulse sequence $\{t_k\}_{k\in\mathbb{N}}$ verifying $t_{k+1}-t_k>\eps$, for any $0<\eps<T$ and for all $k\in\mathbb{N}$, if there exists a matrix $P\in\mathbb{S}_{++}^n$ such that $\mathcal{C}(P,A)\prec0$ and $\mathscr{D}(P,J)\prec0$ hold.
\end{corollary}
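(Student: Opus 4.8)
The plan is to derive the corollary from Lemma~\ref{lem:minDT}, whose hypotheses are $\mathscr{C}(P,A)\prec0$ together with $\mathscr{I}(P,A,J,T)\prec0$ for \emph{some} positive $T$. Since $\mathscr{C}(P,A)\prec0$ is assumed outright, the whole task reduces to turning the discrete condition $\mathscr{D}(P,J)\prec0$ into the impulsive condition $\mathscr{I}(P,A,J,s)\prec0$ for \emph{every} $s>0$, and with the \emph{same} matrix $P$.

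The first step is a limiting argument identical to the one behind Lemma~\ref{lem:T0}: the matrix $\mathscr{I}(P,A,J,s)=J^Te^{A^Ts}Pe^{As}J-P$ depends continuously on $s$ and satisfies $\lim_{s\to0^+}\mathscr{I}(P,A,J,s)=J^TPJ-P=\mathscr{D}(P,J)\prec0$, so by continuity there is a $\delta>0$ with $\mathscr{I}(P,A,J,s)\prec0$ for all $s\in(0,\delta]$. The second step extends this bound to all $s\ge\delta$ via the monotonicity already exploited in the proof of Lemma~\ref{lem:minDT}: because $\mathscr{C}(P,A)\prec0$, the $s$-derivative of $J^Te^{A^Ts}Pe^{As}J$ equals $J^Te^{A^Ts}\mathscr{C}(P,A)e^{As}J\preceq0$, hence $s\mapsto J^Te^{A^Ts}Pe^{As}J$ is nonincreasing for $\preceq$, which gives $\mathscr{I}(P,A,J,s)\preceq\mathscr{I}(P,A,J,\delta)\prec0$ for every $s\ge\delta$. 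Combining the two ranges, $\mathscr{I}(P,A,J,s)\prec0$ for all $s>0$. Now, given any $\eps>0$ (the reference scalar $T>\eps$ in the statement plays no role in the argument), applying Lemma~\ref{lem:minDT} with the minimal dwell-time set to $\eps$ is legitimate, since its two hypotheses $\mathscr{C}(P,A)\prec0$ and $\mathscr{I}(P,A,J,\eps)\prec0$ hold; it yields asymptotic stability of~(\ref{eq:mainsyst}) for every impulse sequence with $t_{k+1}-t_k\ge\eps$, hence in particular for every sequence with $t_{k+1}-t_k>\eps$.

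I do not expect a genuine obstacle; the only delicate point is handling \emph{arbitrarily small} dwell-times, which is exactly what the $s\to0^+$ limit and the resulting $\delta$ take care of. For completeness, note that Lemma~\ref{lem:minDT} can also be bypassed by a direct argument: the two LMIs say that $V(x)=x^TPx$ strictly decreases along the flow, $\dot V(x(t))=x(t)^T\mathscr{C}(P,A)x(t)\le-\alpha\|x(t)\|_2^2$ for some $\alpha>0$, and does not increase at the jumps, $V(Jx)=x^T\big(P+\mathscr{D}(P,J)\big)x\le V(x)$; hence $t\mapsto V(x(t))$ is nonincreasing, and since each inter-impulse interval has length $>\eps$ it contracts by at least the fixed factor $e^{-\alpha\eps/\mu_2}$ over each such interval, so $V(x(t_k))\to0$ geometrically and, $V$ being nonincreasing, $\|x(t)\|_2\to0$, with Lyapunov stability following from the quadratic bounds~(\ref{Th01}).
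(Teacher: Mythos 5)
Your proof is correct and follows essentially the same route as the paper, which simply lets $T\to0$ in condition (\ref{eq:lolilol2}) of Lemma \ref{lem:minDT}; you merely make explicit the continuity argument near $s=0$ (the content of Lemma \ref{lem:T0}) and the monotonicity extension to all $s>0$ that the paper leaves implicit. The direct Lyapunov contraction argument you append is also valid but not needed.
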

\begin{proof}
It is sufficient to let $T\to0$ in the condition (\ref{eq:lolilol2}) of Lemma \ref{lem:minDT}.
\end{proof}

Note that the same condition can be found in \citep[Theorem 4.1.1]{Yang:01b} or in \citep[Theorem 2]{Hespanha:08} albeit expressed in different ways. 

\subsection{Maximal dwell-time characterization and arbitrary impulses}

Results on stability with maximal dwell-time \citep{Hespanha:08} are discussed in this section.

\begin{lemma}[Maximal Dwell-Time]\label{lem:maxDT}
  Assume that for some given $T>0$, there exists a matrix $P\in\mathbb{S}_{++}^n$ such that the LMIs $\mathcal{C}(P,A)\succ0$ and ${\mathscr{I}(P,A,J,T)\prec0}$ hold.

  Then, for any impulse sequence $\{t_k\}_{k\in\mathbb{N}}$ satisfying $\eps<t_{k+1}-t_k\le T$, for any $0<\eps<T$, the system (\ref{eq:mainsyst}) is asymptotically stable.
\end{lemma}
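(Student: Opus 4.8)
The plan is to replay the proof of Lemma \ref{lem:minDT} with the monotonicity reversed. Fix an admissible impulse sequence with $T_k=t_{k+1}-t_k\in(\eps,T]$ and consider, as there, the scalar function $s\mapsto V(x(t_k+s))=x(t_k)^TJ^Te^{A^Ts}Pe^{As}Jx(t_k)$. Differentiating gives $\frac{d}{ds}V(x(t_k+s))=x(t_k)^TJ^Te^{A^Ts}\mathscr{C}(P,A)e^{As}Jx(t_k)$; the hypothesis $\mathscr{C}(P,A)\succ0$ now makes this quantity nonnegative for all $s\ge0$, so $s\mapsto J^Te^{A^Ts}Pe^{As}J$ is nondecreasing in the Loewner order on $[0,\infty)$. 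In particular $J^Te^{A^Ts}Pe^{As}J\preceq J^Te^{A^TT}Pe^{AT}J$ for every $s\in(0,T]$, and subtracting $P$ yields $\mathscr{I}(P,A,J,s)\preceq\mathscr{I}(P,A,J,T)\prec0$ for all $s\in(0,T]$, a fortiori for every admissible $T_k$.

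Second, I would read the last inequality as a discrete-time Lyapunov decrease. Writing $x(t_{k+1})=e^{AT_k}Jx(t_k)$ one has $V(x(t_{k+1}))-V(x(t_k))=x(t_k)^T\mathscr{I}(P,A,J,T_k)x(t_k)$, and since $\mathscr{I}(P,A,J,T_k)\preceq\mathscr{I}(P,A,J,T)\prec0$ there is a single $\alpha>0$ with $\mathscr{I}(P,A,J,T_k)\preceq-\alpha I$ uniformly in $k$; combined with the bounds (\ref{Th01}) on $V$ this gives $V(x(t_{k+1}))\le(1-\alpha/\mu_2)V(x(t_k))$, hence geometric decay of $\{V(x(t_k))\}_{k\in\mathbb{N}}$ to zero. (Equivalently, one may invoke statement (i) of Theorem \ref{Th0} on the interval $[\eps,T]$, which is exactly this discrete-time decrease.)

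Third, I would upgrade convergence of the sampled sequence to asymptotic stability of $x(t)$, copying the final block of the proof of Theorem \ref{Th0}: for $\tau\in[0,T_{k+1}]$ one has $V(\chi_{k+1}(\tau))=\chi_k(T_k)^TJ^Te^{A^T\tau}Pe^{A\tau}J\chi_k(T_k)\le\eta\,\mu_1^{-1}V(\chi_k(T_k))$ with $\eta:=\sup_{s\in[\eps,T]}\bar\lambda\bigl(J^Te^{A^Ts}Pe^{As}J\bigr)$, finite because the supremum is taken over the compact set $[\eps,T]$. Boundedness and convergence of $V(x(t_k))$ then propagate to $V(x(t))$ and to $||x(t)||_2$, which finishes the argument. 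I do not expect a real obstacle here; the only subtlety is that the reversed monotonicity delivers $\mathscr{I}(P,A,J,s)\prec0$ on the whole half-open interval $(0,T]$ — not merely near $T$ — so no lower bound on the dwell-time is needed for the discrete decrease itself, and the role of $\eps>0$ is solely to guarantee $t_k\to\infty$ (no accumulation point), making the asymptotic stability notion meaningful. The finiteness of $T_{max}=T$, rather than any stability of $A$ (which fails in the intended application $\Re[\lambda(A)]>0$), is what keeps $\eta$ bounded.
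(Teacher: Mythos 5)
Your proposal is correct and follows exactly the route the paper intends: its proof of Lemma \ref{lem:maxDT} is the single line ``similar to the one of Lemma \ref{lem:minDT}'', and you carry out precisely that reversal of the monotonicity argument (with $\mathscr{C}(P,A)\succ0$ making $s\mapsto J^Te^{A^Ts}Pe^{As}J$ nondecreasing, hence $\mathscr{I}(P,A,J,s)\preceq\mathscr{I}(P,A,J,T)\prec0$ on $(0,T]$), then append the same discrete-decrease and boundedness steps used in Theorem \ref{Th0}. The only cosmetic remark is that the supremum defining $\eta$ should be taken over $s\in[0,T]$ rather than $[\eps,T]$ since $\tau$ ranges down to $0$, but this changes nothing as the set remains compact.
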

\begin{proof}
  The proof is similar to the one of Lemma \ref{lem:minDT}.
\end{proof}

\begin{theorem}[Maximal Dwell-Time]\label{th:maxDT}
  Assume that for some given $T>0$, there exist matrices $P,Z\in\mathbb{S}^n_{++}\succ0$, $U,Q\in\mathbb{S}^n$, $N\in\mathbb{R}^{n\times 2n}$ and $R\in\mathbb{R}^{n\times n}$ such that the LMIs $\Psi(T)\prec0$, $\Phi(T)\prec0$ and $\mathscr{C}(P,A)\succ0$ hold. 

  Then, for any impulse sequence $\{t_k\}_{k\in\mathbb{N}}$ satisfying $\eps<t_{k+1}-t_k\le T$, for any $0<\eps<T$, the system (\ref{eq:mainsyst}) is asymptotically stable and Lemma \ref{lem:maxDT} holds with the same matrix $P$.
\end{theorem}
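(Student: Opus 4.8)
The plan is to mirror the proof of Theorem~\ref{th:minDT}, combining the functional construction of Theorem~\ref{th:imp:nominal} with the monotonicity argument already used for Lemma~\ref{lem:maxDT}. The only structural difference with the minimal dwell-time case is the \emph{direction} of the inequality imposed on $\mathscr{C}(P,A)$, which reverses the monotonicity of the eigenvalues of $J^Te^{A^Ts}Pe^{As}J$ in $s$, so that the pivot value $s=T$ now becomes a maximizer rather than a minimizer over the admissible range of dwell-times.

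First I would invoke Theorem~\ref{th:imp:nominal}: since the LMIs $\Psi(T)\prec0$ and $\Phi(T)\prec0$ hold for the given $T$, that theorem guarantees asymptotic stability of the $T$-periodic impulsive system and, crucially for what follows, that the quadratic form $V(x)=x^TPx$ built from the \emph{same} matrix $P$ satisfies $\mathscr{I}(P,A,J,T)\prec0$. Hence the two standing hypotheses of Lemma~\ref{lem:maxDT}, namely $\mathscr{C}(P,A)\succ0$ and $\mathscr{I}(P,A,J,T)\prec0$, are in force with a common $P$.

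Next I would reproduce the monotonicity step of Lemma~\ref{lem:maxDT}: differentiating $s\mapsto V(x(t_k+s))=x(t_k)^TJ^Te^{A^Ts}Pe^{As}Jx(t_k)$ produces a quadratic form in $e^{As}Jx(t_k)$ with kernel $\mathscr{C}(P,A)$, which is now \emph{positive} semidefinite, so the eigenvalues of $J^Te^{A^Ts}Pe^{As}J$ are nondecreasing in $s\ge0$. Consequently $J^Te^{A^Ts}Pe^{As}J\preceq J^Te^{A^TT}Pe^{AT}J$ for every $s\in(0,T]$, whence $\mathscr{I}(P,A,J,s)\preceq\mathscr{I}(P,A,J,T)\prec0$ for all such $s$. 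Applying the discrete-time Lyapunov argument of Lemma~\ref{lem:maxDT} — equivalently, observing that the map $x(t_{k+1})=e^{AT_k}Jx(t_k)$ with $T_k\in(\eps,T]$ admits $V$ as a Lyapunov function — yields asymptotic stability for every impulse sequence satisfying $\eps<t_{k+1}-t_k\le T$, and the final clause of the statement, that Lemma~\ref{lem:maxDT} holds with the same $P$, is precisely what has been established along the way.

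I do not expect a genuine obstacle here; the one point worth stating carefully is that Theorem~\ref{th:imp:nominal} only certifies $\mathscr{I}(P,A,J,\cdot)\prec0$ at the single value $T$, so the extension to the whole interval $(\eps,T]$ genuinely requires the sign condition $\mathscr{C}(P,A)\succ0$ and cannot be read off from the LMIs $\Psi,\Phi$ alone. Unlike Lemma~\ref{lem:T0} and Corollary~\ref{cor:arb}, no $T\to0$ analysis is needed, since the dwell-times are bounded below by $\eps>0$.
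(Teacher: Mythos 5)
Your proposal is correct and follows exactly the paper's route: the paper's proof of Theorem~\ref{th:maxDT} is the one-line observation that it follows from combining Theorem~\ref{th:imp:nominal} (which supplies $\mathscr{I}(P,A,J,T)\prec0$ with the same $P$) with Lemma~\ref{lem:maxDT}. You merely spell out the monotonicity step of Lemma~\ref{lem:maxDT} (which the paper itself leaves as ``similar to Lemma~\ref{lem:minDT}''), and that expansion is accurate, including the correct reversal of the eigenvalue monotonicity under $\mathscr{C}(P,A)\succ0$.
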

\begin{proof}
  The proof follows from Lemma \ref{lem:maxDT} and Theorem \ref{th:imp:nominal}.
\end{proof}

\subsection{Stability over arbitrary intervals - Ranged dwell-time}\label{sec:ranged}

The above results can only be applied when $A$ is Hurwitz or anti-Hurwitz. To overcome this limitation, the following results considering dwell-times belonging to general bounded intervals of the form $[T_{min},T_{max}]$ are provided. This leads to the notion of \emph{ranged dwell-time} which combines the concepts of minimal and maximal dwell-time together.

It is immediate to derive the following preliminary result on ranged dwell-time:
\begin{lemma}[Ranged dwell-time]\label{th:ranged}
   Assume there exists a matrix $P\in\mathbb{S}_{++}^n$ such that \begin{equation}\label{eq:tyouliol}
    \mathscr{I}(P,A,J,\theta)\prec0
\end{equation}
for all $\theta\in[T_{min},T_{max}]$, $\eps<T_{min}<T_{max}<+\infty$, for some $\eps>0$.

Then, for any impulse sequence $\{t_k\}_{k\in\mathbb{N}}$ satisfying $t_{k+1}-t_k\in[T_{min},T_{max}]$, the system (\ref{eq:mainsyst}) is asymptotically stable.
\end{lemma}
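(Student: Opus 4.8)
The plan is to reduce the stability claim of Lemma \ref{th:ranged} directly to the discrete-time Lyapunov condition already packaged in Theorem \ref{Th0}. The hypothesis gives a single $P\in\mathbb{S}_{++}^n$ for which $\mathscr{I}(P,A,J,\theta)\prec0$ holds simultaneously for every $\theta\in[T_{min},T_{max}]$; writing $V(x)=x^TPx$, this is precisely the statement that $V(\chi_k(T_k))-V(\chi_{k-1}(T_{k-1}))<0$ whenever $T_{k-1}\in[T_{min},T_{max}]$, since $\chi_k(T_k)=e^{AT_k}J\chi_{k-1}(T_{k-1})$ and hence $V(\chi_k(T_k))=\chi_{k-1}(T_{k-1})^T\big(J^Te^{A^TT_{k-1}}Pe^{AT_{k-1}}J\big)\chi_{k-1}(T_{k-1})$.

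First I would fix an arbitrary admissible impulse sequence $\{t_k\}_{k\in\mathbb{N}}$ with $T_k=t_{k+1}-t_k\in[T_{min},T_{max}]$, form the lifted sequence $\{\chi_k\}$, and observe via the identities $\chi_k(\tau)=e^{A\tau}\chi_k(0)$ and $\chi_{k+1}(0)=J\chi_k(T_k)$ that $\chi_{k+1}(T_{k+1})=e^{AT_{k+1}}J\chi_k(T_k)$. Next I would note that the strict inequality $\mathscr{I}(P,A,J,\theta)\prec0$ over the \emph{compact} interval $[T_{min},T_{max}]$ yields a uniform contraction factor: by continuity of $\theta\mapsto\bar\lambda\big(P^{-1/2}(J^Te^{A^T\theta}Pe^{A\theta}J)P^{-1/2}\big)$ and compactness there is $\rho_0\in[0,1)$ with $J^Te^{A^T\theta}Pe^{A\theta}J\preceq\rho_0\,P$ for all $\theta\in[T_{min},T_{max}]$. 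Consequently $V(\chi_{k+1}(T_{k+1}))\le\rho_0\,V(\chi_k(T_k))$, so $V(\chi_k(T_k))\le\rho_0^k\,V(\chi_0(T_0))\to0$; this is exactly statement (i) of Theorem \ref{Th0}, which by the theorem's equivalence and its ``proof of asymptotic stability'' part gives boundedness and convergence of $\|x(t)\|_2$, hence asymptotic stability. Since the sequence was arbitrary subject to the dwell-time constraint, the conclusion holds for all such sequences.

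Alternatively, and perhaps cleanest for the write-up, one can bypass re-deriving the uniform bound: simply invoke Theorem \ref{Th0}(i)$\Rightarrow$(ii)$\Rightarrow$ asymptotic stability, after remarking that $\mathscr{I}(P,A,J,T_k)\prec0$ for each $k$ \emph{is} the assertion that $\{V(\chi_k(T_k))\}$ is decreasing, and that the strictness together with $T_k$ ranging over a fixed compact set supplies the uniformity needed for the convergence argument already carried out inside the proof of Theorem \ref{Th0} (where $\eta=\sup_{s\in[T_{min},T_{max}]}\bar\lambda(J^Te^{A^Ts}Pe^{As}J)$ is finite precisely because $[T_{min},T_{max}]$ is bounded).

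The only mild subtlety — the ``hard part,'' though it is routine — is going from the pointwise strict LMI $\mathscr{I}(P,A,J,\theta)\prec0$, $\theta\in[T_{min},T_{max}]$, to a uniform geometric decay rate; this needs a compactness argument, which is available exactly because $T_{min}>\eps>0$ and $T_{max}<+\infty$ are assumed. Everything else is a direct substitution of the lifting identities into $V$ and an appeal to Theorem \ref{Th0}. No looped-functional needs to be constructed explicitly here, since Lemma \ref{th:ranged} is stated at the level of the exponential condition $\mathscr{I}$ rather than the LMI relaxation; the looped-functional machinery enters only when one later wishes to replace $\mathscr{I}(P,A,J,\theta)\prec0$ for all $\theta$ by a finite set of $\theta$-affine LMIs.
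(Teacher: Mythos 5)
Your proof is correct. Note, though, that the paper itself offers no proof of this lemma at all: it is introduced with ``It is immediate to derive the following preliminary result,'' so there is nothing to compare line by line. Your write-up is the natural formalization of that ``immediate'' step — substituting the lifting identities $\chi_k(T_k)=e^{AT_k}J\chi_{k-1}(T_{k-1})$ into $V(x)=x^TPx$ to see that $\mathscr{I}(P,A,J,T_k)\prec0$ is exactly the discrete-time Lyapunov decrease, i.e.\ statement (i) of Theorem \ref{Th0} — and your second paragraph is the intended one-line argument. Where you go beyond the paper is in the compactness step: you correctly observe that strict pointwise decrease of $\{V(\chi_k(T_k))\}$ alone only gives convergence to some nonnegative limit, and that one needs the uniform contraction factor $\rho_0<1$ obtained from continuity of $\theta\mapsto\bar\lambda\bigl(P^{-1/2}J^Te^{A^T\theta}Pe^{A\theta}JP^{-1/2}\bigr)$ on the compact interval $[T_{min},T_{max}]$ to conclude $V(\chi_k(T_k))\to0$. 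The paper glosses over this point even inside the proof of Theorem \ref{Th0} (it asserts $\|\chi_k(T_k)\|_2\to0$ directly from the decrease), so your argument is the more careful one and buys an explicit geometric decay rate. One cosmetic slip: in $V(\chi_k(T_k))=\chi_{k-1}(T_{k-1})^TJ^Te^{A^T\theta}Pe^{A\theta}J\chi_{k-1}(T_{k-1})$ the exponent should carry $\theta=T_k$, not $T_{k-1}$ (since $\chi_k(T_k)=e^{AT_k}\chi_k(0)$); this does not affect the argument because both indices yield a dwell-time in $[T_{min},T_{max}]$, but it should be fixed in the write-up.
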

It is clear that the semi-infinite dimensional LMI (\ref{eq:tyouliol}) is not easy to check due to the presence of the uncertain parameter $\theta\in[T_{min},T_{max}]$. The use of Theorem \ref{Th0} allows one to overcome this difficulty.
\begin{theorem}\label{th:aperiodic_range}
  The impulsive system (\ref{eq:mainsyst}) with $T_k\in[T_{min},T_{max}]$, $\eps<T_{min}\le T_{max}<\infty$, $\eps>0$, is asymptotically stable if there exist matrices $P,Z\in\mathbb{S}_{++}^n$, $Q,U\in\mathbb{S}^n$, $R\in\mathbb{R}^{n\times n}$ and $N\in\mathbb{R}^{n\times 2n}$ such that $\Psi(T)\prec0$ and $\Phi(T)\prec0$
hold for all $T\in\{T_{min},T_{max}\}$.

Moreover, in such a case, the inequality $\mathscr{I}(P,A,J,\theta)\prec0$ holds for all ${\theta\in[T_{min},T_{max}]}$ and Lemma \ref{th:ranged} is verified with the same matrix $P$.
\end{theorem}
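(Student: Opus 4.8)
The plan is to reduce Theorem~\ref{th:aperiodic_range} to the already-established Theorem~\ref{th:imp:nominal} by exploiting the affine (hence convex) dependence of the LMIs $\Psi(T)$ and $\Phi(T)$ on the dwell-time parameter $T$. First I would fix an arbitrary impulse sequence $\{t_k\}_{k\in\mathbb{N}}$ with $T_k=t_{k+1}-t_k\in[T_{min},T_{max}]$ and apply Theorem~\ref{Th0} with the \emph{same} quadratic form $V(x)=x^TPx$ and the \emph{same} looped-functional $\mathcal{V}(\tau,\chi_k,T_k)$ given by~(\ref{eq:calV}), now evaluated at the actual (possibly varying) dwell-time $T_k$ rather than a fixed period $T$. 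The looped condition~(\ref{Th02}), i.e. $\mathcal{V}(0,z,T_k)=\mathcal{V}(T_k,z,T_k)=0$, holds verbatim for every $T_k\in[T_{min},T_{max}]$, so $\mathcal{V}\in\mathfrak{LF}([T_{min},T_{max}])$. Exactly as in the proof of Theorem~\ref{th:imp:nominal}, the derivative $\dot{\mathcal{W}}_k$ along $\dot\chi_k=A\chi_k$ is bounded above by $\tfrac{1}{T_k}\xi_k(\tau)^T\big[F_0+\tau F_1+(T_k-\tau)F_2+(T_k-2\tau)F_3\big]\xi_k(\tau)$ with $F_1=N^TZ^{-1}N$, the affine Jensen bound~(\ref{eq:affjen}) being used on the integral term.

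The next step is the double convexity argument. For a \emph{fixed} $T_k$, the bracketed matrix is affine in $\tau$ on $[0,T_k]$, so negative definiteness on the whole interval is equivalent to negative definiteness at the two endpoints $\tau\in\{0,T_k\}$; a Schur complement on the term $T_kN^TZ^{-1}N$ at $\tau=0$ turns the two endpoint conditions into $\Psi(T_k)\prec0$ and $\Phi(T_k)\prec0$ — this is precisely the content of Theorem~\ref{th:imp:nominal} read with $T$ replaced by $T_k$. Now observe that $\Psi(T)=F_0+T(F_2+F_3)$ and $\Phi(T)=\begin{bmatrix} F_0-TF_3 & N^T\\ \star & -Z/T\end{bmatrix}$ depend on $T$ affinely, except for the $-Z/T$ block in $\Phi$. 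I would handle this by noting that $\Phi(T)\prec0$ is, via the reverse Schur complement, equivalent to $F_0-TF_3+T\,N^TZ^{-1}N\prec0$, which \emph{is} affine in $T$; hence the set of $T>0$ for which both $\Psi(T)\prec0$ and $\Phi(T)\prec0$ hold is convex (an intersection of two half-line-type convex conditions in $T$). Consequently, if both LMIs hold at the two vertices $T=T_{min}$ and $T=T_{max}$, they hold for every $T\in[T_{min},T_{max}]$, and in particular for $T=T_k$. Therefore $\dot{\mathcal{W}}_k$ is negative definite on $(0,T_k)$ for every $k$, statement (ii) of Theorem~\ref{Th0} is met, and asymptotic stability under ranged dwell-time follows directly.

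For the "moreover" part, I would invoke the last sentence of Theorem~\ref{th:imp:nominal}: the satisfaction of $\Psi(\theta)\prec0$ and $\Phi(\theta)\prec0$ implies $\mathscr{I}(P,A,J,\theta)\prec0$ with the same $P$. Since, by the convexity argument just made, $\Psi(\theta)\prec0$ and $\Phi(\theta)\prec0$ hold for \emph{every} $\theta\in[T_{min},T_{max}]$ once they hold at the two vertices, we conclude $\mathscr{I}(P,A,J,\theta)\prec0$ for all $\theta\in[T_{min},T_{max}]$, which is exactly the hypothesis of Lemma~\ref{th:ranged}; thus Lemma~\ref{th:ranged} is verified with the same matrix $P$, giving an independent confirmation of asymptotic stability.

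The main obstacle I anticipate is the convexity-in-$T$ claim for $\Phi(T)$: because of the $-Z/T$ entry, $\Phi(T)$ is not itself affine in $T$, so one cannot naively say "check the vertices." The clean way around it is the Schur-complement reformulation $F_0+T(N^TZ^{-1}N-F_3)\prec0$, which restores affinity; one must just be careful that this equivalence is valid (it is, since $Z\succ0$ so $-Z/T\prec0$ for $T>0$) and that the resulting affine matrix function of $T$ is what gets evaluated at the vertices. A minor secondary point is that the looped-functional $\mathcal{V}$ and the vector $\xi_k$ now carry the running index-dependent length $T_k$; one should remark that all the algebra in the proof of Theorem~\ref{th:imp:nominal} is pointwise in $T_k$ and never used periodicity, so it transfers without change.
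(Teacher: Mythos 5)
Your proof is correct and follows essentially the same route as the paper: reduce to Theorem~\ref{th:imp:nominal} pointwise in the dwell-time and use convexity in $T$ to replace the interval $[T_{min},T_{max}]$ by its two endpoints. You are in fact more careful than the paper on the one delicate step --- the paper simply asserts that $\Phi(T)$ is ``convex in $T$,'' whereas the $-Z/T$ block is matrix-\emph{concave} (so a direct convex-combination argument on $\Phi$ goes the wrong way), and your Schur-complement reformulation $F_0-TF_3+TN^TZ^{-1}N\prec0$, affine in $T$, is the right justification for the vertex check; the only nitpick is that this Schur complement corresponds to the endpoint $\tau=T_k$ of the bracketed quadratic form (where the $\tau F_1$ term is active), not $\tau=0$, which yields $\Psi(T_k)$ directly.
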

\begin{proof}
  Let us consider the LMIs of Theorem \ref{th:imp:nominal} allowing the check the stability of the system for a constant dwell-time $T>0$. In order to prove that the impulsive system is asymptotically stable for a varying dwell-time belonging to $[T_{min},T_{max}]$, the LMI must be simply checked over the interval $\theta\in[T_{min},T_{max}]$. Since the LMIs $\Psi(T)$ and $\Phi(T)$ are convex in $T$, it is then enough to check it at the vertices of the interval, and hence for all values in the finite set $\{T_{min},T_{max}\}$. This concludes the proof.
\end{proof}

It seems important to point out that, unlike the results on minimal and maximal dwell-time requiring stability or anti-stability of the matrix $A$, the above result is applicable to any linear impulsive system. It can notably be applied to systems for which neither $A$ nor $J$ is stable, which is very interesting since existing methods cannot handle such a situation. This is to put in contrast, for instance, with \citep[Theorem 1]{Hespanha:08}, transferred to a linear setting, for which it is necessary that at least one of the matrices be stable. Another feature is the reduction to a finite-dimensional LMI problem while the condition of Lemma \ref{th:ranged} is semi-infinite dimensional.

Notably, the above result may be used to provide an alternative maximal dwell-time result in which the anti-stability constraint on the matrix $A$ is relaxed:
\begin{corollary}[Alternative maximal dwell-time result]
    Assume that for some given $T>0$, there exist matrices $P,Z\in\mathbb{S}^n_{++}\succ0$, $U,Q\in\mathbb{S}^n$, $N\in\mathbb{R}^{n\times 2n}$ and $R\in\mathbb{R}^{n\times n}$ such that the LMIs $\Psi(T)\prec0$, $\Phi(T)\prec0$ and $\mathscr{D}(P,J)\prec0$ hold.

  Then, for any impulse sequence $\{t_k\}_{k\in\mathbb{N}}$ satisfying $\eps<t_{k+1}-t_k\le T$, $\eps>0$, the system (\ref{eq:mainsyst}) is asymptotically stable and Lemma \ref{lem:maxDT} holds with the same matrix $P$.
\end{corollary}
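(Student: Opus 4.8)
The plan is to combine Theorem~\ref{th:aperiodic_range} with Lemma~\ref{lem:T0} and the maximal dwell-time mechanism of Lemma~\ref{lem:maxDT}, exactly in the spirit of Corollary~\ref{cor:arb} but without assuming anti-stability of $A$. First I would invoke Theorem~\ref{th:aperiodic_range}: the hypothesis $\Psi(T)\prec0$, $\Phi(T)\prec0$ gives, via that theorem's ``moreover'' statement applied with $T_{min}=T_{max}=T$, that $V(x)=x^TPx$ is a discrete-time Lyapunov function for the periodic system with period $T$, i.e. $\mathscr{I}(P,A,J,T)\prec0$. So the given $P$ already handles dwell-time equal to $T$; the remaining task is to propagate negativity of $\mathscr{I}(P,A,J,s)$ down to all $s\in(\eps,T]$.

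The key step is to study $s\mapsto J^Te^{A^Ts}Pe^{As}J$ on the interval $(0,T]$. Following the proof of Lemma~\ref{lem:minDT} verbatim, its derivative in $s$ is $J^Te^{A^Ts}\mathscr{C}(P,A)e^{As}J$. Here, however, we do \emph{not} know the sign of $\mathscr{C}(P,A)$, so monotonicity in $s$ is unavailable. Instead I would argue by a continuity/compactness sandwich: by Lemma~\ref{lem:T0}, the extra hypothesis $\mathscr{D}(P,J)=J^TPJ-P\prec0$ is precisely $\left.\mathscr{I}(P,A,J,s)\right|_{s=0}\prec0$, and by continuity of eigenvalues in $s$ there is $\delta>0$ such that $\mathscr{I}(P,A,J,s)\prec0$ for all $s\in[0,\delta]$. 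On the other hand $\mathscr{I}(P,A,J,T)\prec0$. The difficulty is that these two facts alone do not cover the middle of $[0,T]$: the map $s\mapsto\bar\lambda(\mathscr{I}(P,A,J,s))$ need not be monotone or even sign-definite between $\delta$ and $T$. This is the main obstacle, and I expect it must be resolved not by a pointwise argument on $s$ but by re-reading the LMIs $\Psi,\Phi$ as \emph{ranged} conditions.

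Concretely, the clean route is: the LMIs $\Psi(T)\prec0$ and $\Phi(T)\prec0$ are affine, hence convex, in the dwell-time parameter, and at parameter value $0$ they reduce (Lemma~\ref{lem:T0}) to $\mathscr{D}(P,J)\prec0$, which is the third hypothesis. Therefore the hypotheses of the corollary are exactly the hypotheses of Theorem~\ref{th:aperiodic_range} with $T_{min}\to 0^+$ (more precisely $T_{min}=\eps$, using that $\Psi(\eps)\prec0$, $\Phi(\eps)\prec0$ hold by convexity between the endpoint value at $0$, which is $\mathscr{D}(P,J)\prec0$, and the value at $T$) and $T_{max}=T$. Applying Theorem~\ref{th:aperiodic_range} on $[\eps,T]$ then yields $\mathscr{I}(P,A,J,\theta)\prec0$ for all $\theta\in[\eps,T]$, which is precisely the hypothesis of Lemma~\ref{th:ranged} — or equivalently of Lemma~\ref{lem:maxDT} with the anti-stability constraint replaced by the already-established negativity at the left endpoint. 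Lemma~\ref{th:ranged} (equivalently Lemma~\ref{lem:maxDT}) then gives asymptotic stability for every impulse sequence with $\eps<t_{k+1}-t_k\le T$, with the same $P$, which is the assertion.

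Thus the proof would read, in brief: by Lemma~\ref{lem:T0} the condition $\mathscr{D}(P,J)\prec0$ is the limit of $\Psi(T),\Phi(T)\prec0$ as the dwell-time parameter tends to $0$; convexity of $\Psi,\Phi$ in the dwell-time then propagates negativity to the whole interval $[\eps,T]$; Theorem~\ref{th:aperiodic_range} (applied on this interval) yields $\mathscr{I}(P,A,J,\theta)\prec0$ for all $\theta\in[\eps,T]$; and Lemma~\ref{lem:maxDT}/Lemma~\ref{th:ranged} concludes. The one genuinely delicate point to write carefully is the passage ``$\Psi(0^+),\Phi(0^+)\prec0$ together with $\Psi(T),\Phi(T)\prec0$ imply $\Psi,\Phi\prec0$ on all of $[\eps,T]$'', which is just convexity of a matrix-affine function restricted to a segment, but should be stated explicitly since $\eps$ can be taken arbitrarily small below $T$.
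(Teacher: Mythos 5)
Your proposal is correct and follows essentially the same route as the paper, whose proof is a one-line appeal to the ranged dwell-time result (Theorem \ref{th:aperiodic_range}) with the lower endpoint sent to $0$, where Lemma \ref{lem:T0} identifies the limiting LMI with $\mathscr{D}(P,J)\prec0$. Your write-up merely makes explicit the convexity-in-$T$ propagation from the endpoints $\{0^+,T\}$ to all of $[\eps,T]$ (and correctly discards the monotonicity argument of Lemma \ref{lem:minDT}, which is unavailable here), steps the paper leaves implicit.
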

\begin{proof}
  Using the ranged dwell-time result (Theorem \ref{th:aperiodic_range}) and letting $T\to0$, we obtain the limit LMI $J^TPJ-P\prec0$, as shown in Lemma \ref{lem:T0}. This concludes the proof.
\end{proof}
%


\subsection{Examples}

It is illustrated below, through academic examples, that the proposed alternative dwell-time formulations may lead to interesting results in terms of accuracy. It is important to stress that the conservatism is only due to the choice for the functional since Theorem \ref{Th0} provides equivalent statements. This increase of conservatism is the price to pay to obtain tractable and efficient tools for the analysis of uncertain linear impulsive systems.

\begin{example}[Maximal dwell-time]\label{ex:1}
  Let us consider system (\ref{eq:mainsyst}) with matrices
  \begin{equation}\label{eq:ex1}
  \begin{array}{lclclcl}
    A&=&\begin{bmatrix}
       1  & 3\\
      -1  & 2
    \end{bmatrix},&& J&=&0.5I_2.
  \end{array}
  \end{equation}
  Since $A$ has 2 unstable eigenvalues at $1.5\pm1.6583j$, then overall system stability requires an asymptotically stable $J$, which is the case here. Hence if the pulses do not occur frequently enough, stability is not achievable.

\textbf{Periodic impulses case:} In the periodic case, the spectral radius condition can be used and is equivalent to the feasibility of $\mathscr{I}(P,A,J,T)\prec0$ for some $P\in\mathbb{S}_{++}^n$. The spectral radius condition gives the maximal impulse period $T_{max}=2\log(2)/3\in(0.4620,0.4621)$. Theorem \ref{th:imp:nominal} is applied together with a bisection approach to find the maximal constant $T>0$ that preserves stability and yields the lower bound $T_{max}^\ell=0.4471$. This shows that the provided approach is able to estimate quite well the maximal pulse period for this example.

Choosing for instance $T=0.3$, we obtain the states and continuous-time Lyapunov function trajectories for system (\ref{eq:ex1}) depicted in Fig. \ref{fig:loc1} and \ref{fig:loc2}. 

\begin{figure}[h]
\centering
  \includegraphics[width=0.65\textwidth]{./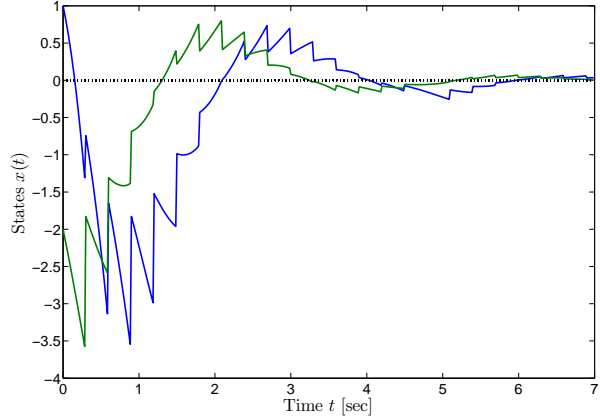}
  \caption{States trajectory of impulsive system (\ref{eq:ex1}) with impulse period $T=0.3$.}\label{fig:loc1}
\end{figure}
%

\textbf{Aperiodic impulses case:} Since the matrix $A$ is anti-Hurwitz, then this system may fulfill conditions of the maximal dwell-times results. When Lemma \ref{lem:maxDT}, based on the discrete-time Lyapunov condition, is considered, the lower bound on the maximal dwell-time $T_{max}^{\ell}=0.4620$ is found. This shows that Lemma \ref{lem:maxDT} is able to exactly characterize the maximal dwell-time since the computed value is identical to the maximal admissible impulse period (periodic case). The system (\ref{eq:ex1}) is hence asymptotically stable for all $T_k\in(0,T_{max}]$. Theorem \ref{th:maxDT} is now considered and yields the lower bound on the maximal dwell-time $T_{max}^{\ell_2}=0.4471$. Hence, Theorem \ref{th:maxDT} provides, for this example, a good approximation of Lemma \ref{lem:maxDT}.
\end{example}

\begin{example}[Minimal dwell-time]\label{ex:2}
  Now consider system (\ref{eq:mainsyst}) with matrices
    \begin{equation}\label{eq:ex2}
  \begin{array}{lclclcl}
    A&=&\begin{bmatrix}
       -1& 0\\
       1 &-2
    \end{bmatrix},&& J&=&\begin{bmatrix}
      2 &1\\
      1 &3
    \end{bmatrix}.
  \end{array}
  \end{equation}
    Since the matrix $A$ is Hurwitz and $J$ is anti-Schur, then, if the impulses are too frequent, stability is lost. It is hence expected to find a sufficiently small dwell-time for which stability is lost.

\textbf{Periodic impulses case:} The spectral radius condition yields $T_{min}\in[1.1405, 1.1406]$ while Theorem \ref{th:imp:nominal} gives the upper-bound $T_{min}^u=1.2323$ on the minimal impulse period.

\textbf{Aperiodic impulses case:} Since $A$ is Hurwitz, then the system may fulfill conditions of the minimal dwell-time results. Lemma \ref{lem:minDT} yields the upper bound on the minimal dwell-time $1.1405$ which is almost identical to the minimal period of the periodic case. This emphasizes the exactness of Lemma \ref{lem:minDT} in the estimation of the minimal dwell-time. System (\ref{eq:ex2}) is hence guaranteed to be asymptotically stable for all $T_k\in[T_{min}^u,+\infty)$. In contrast, Theorem \ref{th:minDT} predicts the upper bound  $T_{min}^u=1.2323$, showing the quite good accuracy of the proposed approach.
\end{example}

\begin{example}\label{ex:3}
  Consider now system (\ref{eq:mainsyst}) with matrices
  \begin{equation}\label{eq:ex3}
      \begin{array}{lclclcl}
    A&=&\begin{bmatrix}
       -1 & 0.1\\
       0 & 1.2
    \end{bmatrix},&& J&=&\begin{bmatrix}
      1.2 &0\\
      0 &0.5
    \end{bmatrix}.
  \end{array}
  \end{equation}
  Note that, the continuous-time dynamics of the first state is stable while the second is unstable. Conversely, the matrix $J$ has a stable eigenvalue for the second state and an unstable one for the first state. It is hence expected that the range of admissible dwell-times is a connected interval excluding $0$ and $+\infty$. It is important to stress that such a system cannot be analyzed using existing methods, such as the ones in \citep{Cai:08,Hespanha:08}, since neither $A$ nor $J$ is stable.

\textbf{Periodic impulses case:} An eigenvalue analysis gives the admissible range $[0.1824, 0.5776]$ of dwell-times. When Theorem \ref{th:imp:nominal} is used with two bisection routines, the following bounds are obtained $T_{min}^u = 0.1824$, $T_{max}^\ell = 0.5760$. This illustrates that the approach is able to characterize quite well the stability of this system.

To emphasize that this case cannot be handled with existing methods, let $T=0.3$ and we find $P=\diag(2.3622, 1.4752)$ along with \citep{Hespanha:08}:
\begin{equation}\label{eq:alphastab}
\begin{array}{lcl}
  \mathscr{C}(P,A)&\preceq&-cP\\
  \mathscr{D}(P,J)&\preceq& (e^{-d}-1)P
\end{array}
\end{equation}
where $c=-2.4036$ and $d=-0.3646$. Note also that no $P\succ0$ satisfies one of the conditions (\ref{eq:alphastab}) with $c>0$ or $d>0$. Since both $c$ and $d$ are negative for all $P\succ0$, the method of \citep{Hespanha:08} is inconclusive. This demonstrates the potential of the proposed functional-based approach.

\textbf{Aperiodic impulses case:} Theorem \ref{th:aperiodic_range} yields the interval $[0.1907, 0.5063]$, which is included in the interval obtained for the case of periodic impulses.\\

This example demonstrates that the proposed approach is able to capture the internal structure of this system better than other approaches, and take the advantage of this feature to improve accuracy. The main drawback of the existing methods based on a Lyapunov conditions and $\alpha$-stability of the form (\ref{eq:alphastab}) is that they consider the worst case eigenvalue (covering of the system) and lose/ignore the internal structure of the system, notably eigenvectors.
\end{example}

\begin{example}
  Let us consider now the sampled-data control system
  \begin{equation}
  \begin{array}{lcl}
    \dot{x}(t)&=&\tilde{A}x(t)+Bu(t),\\
    u(t)&=&Kx(t_k),\ t\in[t_k,t_{k+1})
  \end{array}
  \end{equation}
Reformulating this system as an impulsive system, we obtain
  \begin{equation}
  \begin{array}{lcl}
        \dot{z}(t)&=&\begin{bmatrix}
      \tilde{A} & B\\
      0 & 0
    \end{bmatrix}z(t)\\
    z^+(t_k)&=&\begin{bmatrix}
      I_n & 0\\
      K & 0
    \end{bmatrix}z(t_k)
  \end{array}
  \end{equation}
  where $z(t)=\col(x(t),u(t))$. Note that neither $A$ nor $J$ is a stable matrix, hence the developed maximal and minimal dwell-time results cannot be applied. The ranged dwell-time result however applies here.  It is well-known that when the corresponding continuous-time system is asymptotically stable, i.e. $\tilde{A}+BK$ Hurwitz, then the sampled-data system is stable in a sufficiently small positive neighborhood of the 'zero sampling-period' \cite{Oishi:10, Briat:11b,Seuret:12}. Unfortunately, this result cannot be used in the ranged dwell-time result (Theorem \ref{th:aperiodic_range}) since this condition cannot be obtained using the current approach that requires $J$ to be Schur.
%
%

Let us consider now a sampled-data system with matrices \citep{Naghshtabrizi:08,Seuret:12,Briat:11c}
  \begin{equation}
    \begin{array}{lclclclclcl}
      \tilde{A}&=&\begin{bmatrix}
        0 & 1\\
        0 & -0.1
      \end{bmatrix},\ &&B&=&\begin{bmatrix}
        0\\
        0.1
      \end{bmatrix},\ && K&=&-\begin{bmatrix}
        3.75 & 11.5
      \end{bmatrix}.
    \end{array}
  \end{equation}

Applying Theorem \ref{th:imp:nominal} we find the lower bound on the maximal constant sampling period $T_{max}^\ell=1.7239$ similarly as in \citep{Seuret:12,Briat:11c}. Comparatively, the spectral radius condition yields a maximal value of $1.7294$. Focusing now on the aperiodic case, we apply Theorem \ref{th:aperiodic_range} where we set $T_{min}=10^{-5}$ and we find the lower bound $T_{max}^\ell= 1.7239$.
\end{example}

\section{Quadratic stability analysis of aperiodic uncertain linear impulsive systems}\label{sec:rob}

Extensions of the previous results to the case of uncertain systems are discussed in this section. We hence now consider the case where the matrices $A$ and $J$ belong to some distinct polytopes $\mathcal{A}$ and $\mathcal{J}$, as defined in (\ref{eq:polytopes}). Due to the affine structure of the conditions, the extension to uncertain systems is immediate. Despite of this ease, most of the efficiency of the approach is preserved as demonstrated in what follows.

\subsection{Main results}

The cases of periodic impulses and minimal, maximal and ranged dwell-times for uncertain systems are discussed.

\begin{theorem}[Periodic impulses]\label{th:newrobth}
  The uncertain linear impulsive system (\ref{eq:mainsyst})-(\ref{eq:polytopes}) with periodic impulses is asymptotically stable if there exist matrices $P,Z_j\in\mathbb{S}_{++}^n$, $Q_j,U_j\in\mathbb{S}^n$, $R_j\in\mathbb{R}^{n\times n}$ and $N_j\in\mathbb{R}^{n\times 2n}$ for $j=1,\ldots,N_J$ such that the LMIs
\begin{equation}\label{eq:imp_robA}
\begin{array}{rcl}
  \Psi_{ij}(T):=\begin{bmatrix}
    G^0_{ij}+T(G^1_{ij}+G^2_j) & M_x^TA_i^TZ_j\\
    \star & -Z_j/T
  \end{bmatrix}&\prec&0,\\
  \Phi_{ij}(T):=\begin{bmatrix}
    G^0_{ij}-TG_j^2 & N_j^T\\
    \star & -Z_j/T
  \end{bmatrix}&\prec&0
  \end{array}
\end{equation}
hold for all $i=1,\ldots,N_A$ and $j=1,\ldots,N_J$ where
\begin{equation*}
  \begin{array}{lcl}
    G^0_{ij}&=&T\cdot\He[M_x^TA_i^TPM_x]-M_j^TQ_jM_j\\
    &&+M_-^T(J_j^TPJ_j-P)M_-+\He[N_j^TM_j-M_j^TR_jM_x],\\
    G^1_{ij}&=&\He[M_x^TA_i^TQ_jM_j+M_x^TA_i^TR_jM_x+M_j^TR_jA_iM_x],\\
    G^2_j&=&M_-^TJ_j^TU_jJ_jM_-.
\end{array}
\end{equation*}
Moreover, the quadratic form $V(x)=x^TPx$ is a quadratic Lyapunov function for the uncertain system (\ref{eq:mainsyst}), that is, we have $\mathscr{I}(P,A,J,T)\prec0$ for all $(A,J)\in\mathcal{A}\times\mathcal{J}$.
\end{theorem}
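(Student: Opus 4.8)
The plan is to reduce Theorem~\ref{th:newrobth} to Theorem~\ref{th:imp:nominal} by a vertex/convexity argument on the uncertainty polytopes, exactly in the way Theorem~\ref{th:aperiodic_range} reduces the ranged-dwell-time case to constant $T$ via convexity in $T$. First I would fix arbitrary $(A,J)\in\mathcal{A}\times\mathcal{J}$ and write $A=\sum_{i=1}^{N_A}\alpha_i A_i$, $J=\sum_{j=1}^{N_J}\beta_j J_j$ with $\alpha_i,\beta_j\ge0$, $\sum\alpha_i=\sum\beta_j=1$. The key structural observation is that the matrices $F_0,F_2,F_3$ appearing in Theorem~\ref{th:imp:nominal} — once one substitutes the $J$-dependent slack variables $P$, $Z_j$, $Q_j$, $U_j$, $R_j$, $N_j$ selected according to the active vertex $J_j$ — are affine in $A$ (through the single occurrences of $A$ inside $\He[M_x^TA^TPM_x]$ in $F_0$ and inside $G^1$, $M_x^TA^TZA M_x$ in $F_2$; note the only quadratic-in-$A$ term $M_x^TA^TZAM_x$ is precisely what the Schur complement in $\Psi_{ij}$ removes) and \emph{quadratic} in $J$. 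So the scheme is: build, for the given convex combination, the functional of Theorem~\ref{th:imp:nominal} with the interpolated matrices $P$, $Z=\sum_j\beta_j Z_j$, $Q=\sum_j\beta_j Q_j$, $U=\sum_j\beta_j U_j$, $R=\sum_j\beta_j R_j$, $N=\sum_j\beta_j N_j$, and show that the resulting LMIs $\Psi(T)\prec0$, $\Phi(T)\prec0$ are implied by the vertex LMIs (\ref{eq:imp_robA}).

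The main obstacle is the quadratic dependence on $J$: a naive convex combination of the vertex inequalities $\Psi_{ij}\prec0$, $\Phi_{ij}\prec0$ with weights $\alpha_i\beta_j$ controls the \emph{diagonal-in-$j$} pieces $J_j^TPJ_j$ and $J_j^TU_jJ_j$, not the genuinely bilinear cross terms $J_j^TPJ_\ell$ that arise from $(\sum_j\beta_jJ_j)^TP(\sum_\ell\beta_\ell J_\ell)$. The way around this — and the reason $Z_j,Q_j,\ldots$ are indexed by $j$ only and $P$ is common — is to choose the $J$-dependent multipliers to be \emph{piecewise}: more precisely, one proves that for the specific looped-functional (\ref{eq:calV}) the term $V(\chi_{k}(0))=V(J\chi_{k-1}(T_{k-1}))$ enters only through $J^TPJ$ evaluated against a single post-jump vector, and by convexity of $x\mapsto x^TPx$ one has $V\big(\sum_j\beta_j J_j\xi\big)\le\sum_j\beta_j V(J_j\xi)=\sum_j\beta_j\,\xi^TJ_j^TPJ_j\xi$; the analogous convexity bound applies to the $U_j$-term provided $U_j\succeq0$ (or, without sign restriction, by absorbing it using the standard polytopic S-procedure relaxation). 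This converts the quadratic-in-$J$ obstruction into a linear one at the price of the inequality above, and the resulting upper bound on $\dot{\mathcal{W}}_k$ is then a genuine convex combination $\sum_{i,j}\alpha_i\beta_j\,\xi^T(\text{vertex matrix})\xi$, which is negative by (\ref{eq:imp_robA}) after undoing the two Schur complements (on $TN^TZ^{-1}N$ for $\Phi$, and on $TM_x^TA^TZAM_x$ for $\Psi$, the latter needed because $A\mapsto A^TZA$ is itself not affine).

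Concretely the ordered steps are: (1) recall from the proof of Theorem~\ref{th:imp:nominal} the upper bound $\dot{\mathcal{W}}_k\le \tfrac1T\xi_k^T[F_0+\tau F_1+(T-\tau)F_2+(T-2\tau)F_3]\xi_k$ with $F_1=N^TZ^{-1}N$; (2) substitute the interpolated multipliers and use convexity of $V$ and (if needed) the sign of $U_j$ to replace the quadratic-in-$J$ entries by $\sum_j\beta_j J_j^TPJ_j$ and $\sum_j\beta_j J_j^TU_jJ_j$; (3) use affineness in $A$ of everything except $M_x^TA^TZAM_x$, bound that term by $\sum_{i}\alpha_i M_x^TA_i^TZA_iM_x$ via convexity of $A\mapsto A^TZA$, so the whole bracket becomes $\sum_{i,j}\alpha_i\beta_j$ times a vertex expression; (4) observe that this vertex expression, after a Schur complement on its $TN_j^TZ_j^{-1}N_j$ part and on its $TM_x^TA_i^TZ_jA_iM_x$ part, is exactly the block form in $\Psi_{ij}$ and $\Phi_{ij}$ — evaluated, by affineness of $\Psi_{ij},\Phi_{ij}$ in $\tau$, at the endpoints $\tau\in\{0,T\}$ which is all that is required; (5) conclude $\dot{\mathcal{W}}_k\prec0$ on $(0,T)$, hence by Theorem~\ref{Th0} the sequence $\{V(\chi_k(T))\}$ decreases, and since $(A,J)$ was arbitrary in the polytope the same $P$ gives $\mathscr{I}(P,A,J,T)\prec0$ for all $(A,J)\in\mathcal{A}\times\mathcal{J}$, i.e. quadratic stability. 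Asymptotic stability then follows verbatim as in the last part of the proof of Theorem~\ref{Th0}. I expect step (2) — justifying the convexity replacement for the $U_j$-term without extra hypotheses, or flagging that $U_j\succeq0$ is implicitly what makes it go through — to be the only delicate point; everything else is the same affine-convexity bookkeeping already used for $\Psi(T),\Phi(T)$.
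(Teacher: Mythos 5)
There is a genuine gap in your treatment of the $J$-uncertainty, and it stems from trying to convexify in $J$ at the level of the looped-functional LMIs. First, the $U_j$-term enters the two conditions with \emph{opposite} signs: $+TG^2_j$ in $\Psi_{ij}$ and $-TG^2_j$ in $\Phi_{ij}$, reflecting the sign-changing coefficient $(T-2\tau)$ multiplying $F_3$ in the derivative bound. An upper bound $J^TUJ\preceq\sum_j\beta_jJ_j^TU_jJ_j$ (convexity, needing $U_j\succeq0$) helps only where the coefficient is positive; where it is negative you would need the reverse inequality, so no sign assumption on $U_j$ rescues both LMIs simultaneously, and the theorem anyway allows indefinite $U_j$. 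Second, $J$ also enters quadratically through $-M_j^TQ_jM_j$ with $M_j=[I\ \ -J_j]$ and indefinite $Q_j$; your proposal does not address these cross terms at all, and no convexity argument applies there either. So steps (2)--(3) do not close.

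The paper's proof avoids the problem entirely by performing the two convexifications at \emph{different levels}. For each fixed vertex $J_j$, the LMIs $\Psi_{ij}(T)\prec0$, $\Phi_{ij}(T)\prec0$ are affine in $A_i$ (the extra Schur-complement block $M_x^TA_i^TZ_j$, $-Z_j/T$ in $\Psi_{ij}$ exists precisely to linearize the term $M_x^TA^TZAM_x$), so a convex combination over $i$ recovers the nominal conditions of Theorem \ref{th:imp:nominal} with $J=J_j$ and multipliers $Z_j,Q_j,U_j,R_j,N_j$ for every $A\in\mathcal{A}$; the ``Moreover'' clause of that theorem then yields $\mathscr{I}(P,A,J_j,T)\prec0$ for all $A\in\mathcal{A}$ and each $j$. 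The interpolation in $J$ is then carried out on this \emph{conclusion}, not on the functional: by a Schur complement, $\mathscr{I}(P,A,J_j,T)\prec0$ is equivalent to
\begin{equation*}
\begin{bmatrix}-P & J_j^Te^{A^TT}P\\ \star & -P\end{bmatrix}\prec0,
\end{equation*}
which is affine in $J_j$ because $P$ is common to all $j$; summing over the $N_J$-simplex and undoing the Schur complement gives $\mathscr{I}(P,A,J,T)\prec0$ for all $(A,J)\in\mathcal{A}\times\mathcal{J}$, and asymptotic stability follows from Lemma \ref{lem:periodic}. You correctly identified both the quadratic-in-$J$ obstruction and the structural reason (shared $P$, $j$-indexed multipliers) why something must work, but the resolution is the Schur-complement linearization of $\mathscr{I}$ in $J$, not convexity of the quadratic form inside the functional.
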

\begin{proof}
The goal is to show that the feasibility of the LMIs $\Psi_{ij}(T)\prec0$ and $\Phi_{ij}(T)\prec0$, for all $i=1,\ldots,N_A$ and all $j=1,\ldots,N_J$ implies the feasibility of
$\mathscr{I}(P,A,J,T)\prec0$ for all $(A,J)\in\mathcal{A}\times\mathcal{J}$.

To show this, multiply first $\Psi_{ij}$ and $\Phi_{ij}$ by $\kappa^1_i$ where $\kappa^1=\col_{i=1}^{N_A}[\kappa^1_i]$ belongs to the unit $N_A$-simplex. The sums $\sum_{i=1}^{N_A}\kappa^1_i\Psi_{ij}$ and $\sum_{i=1}^{N_A}\kappa^1_i\Phi_{ij}$ are then both negative definite since the $\Psi_{ij}$'s and $\Phi_{ij}$'s are individually negative definite. After a Schur complement, we obtain an LMI of the same form as in Theorem \ref{th:imp:nominal}, implying then that for each $j=1,\ldots,N_J$  we have
\begin{equation*}
  \mathscr{I}(P,A,J_j,T)\prec0
\end{equation*}
for all $A\in\mathcal{A}$. Note that, by virtue of the Schur complement formula, the above LMI is equivalent to
\begin{equation}
\begin{bmatrix}
  -P & J_j^Te^{A^TT}P\\
  \star & -P
\end{bmatrix}\prec0
\end{equation}
which is linear in $J_j$. Multiplying the above LMI by $\kappa_j^2$ where $\kappa^2=\col_{j=1}^{N_J}[\kappa^2_i]$ belongs to the unit $N_J$-simplex and summing over $j=1,\ldots,N_J$ yields the condition
\begin{equation}
  \mathscr{I}(P,A,J,T)\prec0,\ \mathrm{for\ all\ }(A,J)\in\mathcal{A}\times\mathcal{J}
\end{equation}
by using again the Schur complement formula. This concludes the proof.
\end{proof}

Following the same reasoning as in the previous section, the above theorem can be used to easily derive dwell-time results for uncertain systems.

%
%
%

%

\begin{theorem}[Robust minimal dwell-time]\label{th:minDT_A}
  Assume that for some given $T>0$, there exist matrices $P,Z_j\in\mathbb{S}^n_{++}\succ0$, $U_j,Q_j\in\mathbb{S}^n$, $N_j\in\mathbb{R}^{n\times 2n}$ and $R_j\in\mathbb{R}^{n\times n}$ such that $\Psi_{ij}(T)\prec0$, $\Phi_{ij}(T)\prec0$ and $\mathscr{C}(P,A_i)\prec0$ hold for all $i=1,\ldots,N_A$ and all $j=1,\ldots,N_J$.

Then, for any impulse sequence $\{t_k\}_{k\in\mathbb{N}}$ satisfying $t_{k+1}-t_k\ge T$, the uncertain linear impulsive system (\ref{eq:mainsyst})-(\ref{eq:polytopes}) is asymptotically stable. That is we have $\mathscr{I}(P,A,J,\theta)\prec0$ for all $(A,J)\in\mathcal{A}\times\mathcal{J}$ and all $\theta\in[T,+\infty)$.
\end{theorem}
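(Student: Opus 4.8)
The plan is to obtain this statement exactly as Theorem \ref{th:minDT} was obtained in the nominal case, namely by feeding the robust periodic result (Theorem \ref{th:newrobth}) into the monotonicity-in-the-dwell-time argument of Lemma \ref{lem:minDT}, with the single matrix $P$ doing the certification work at every vertex and hence, by convexity, over the whole polytopes. So no new idea is needed; the content is ``the proof follows from Lemma \ref{lem:minDT} and Theorem \ref{th:newrobth}'', and I would simply make the two reductions involved explicit and uniform.

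First I would invoke the ``moreover'' part of Theorem \ref{th:newrobth}: feasibility of $\Psi_{ij}(T)\prec0$ and $\Phi_{ij}(T)\prec0$ for all $i=1,\dots,N_A$, $j=1,\dots,N_J$ already yields $\mathscr{I}(P,A,J,T)\prec0$ for every $(A,J)\in\mathcal{A}\times\mathcal{J}$, with the common $P$. Second, since $\mathscr{C}(P,\cdot)=(\cdot)^TP+P(\cdot)$ is linear in its matrix argument, the hypotheses $\mathscr{C}(P,A_i)\prec0$ give $\mathscr{C}(P,A)\prec0$ for all $A\in\mathcal{A}$ by taking convex combinations; in particular every such $A$ is Hurwitz, which will be needed for the inter-impulse estimate. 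Next I would fix an arbitrary $(A,J)\in\mathcal{A}\times\mathcal{J}$ and repeat verbatim the computation in the proof of Lemma \ref{lem:minDT}: differentiating $s\mapsto J^Te^{A^Ts}Pe^{As}J$ gives $J^Te^{A^Ts}\mathscr{C}(P,A)e^{As}J\preceq0$, so this matrix-valued map is nonincreasing, whence $\mathscr{I}(P,A,J,s)\preceq\mathscr{I}(P,A,J,T)\prec0$ for all $s\ge T$. Because $(A,J)$ ranges over compact polytopes and the left-hand side depends continuously on $(A,J,s)$, one actually has a uniform bound $\mathscr{I}(P,A,J,s)\preceq-\gamma I\prec0$ for some $\gamma>0$, all $s\ge T$ and all $(A,J)\in\mathcal{A}\times\mathcal{J}$.

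Finally, along any impulse sequence with $T_k=t_{k+1}-t_k\ge T$ one has $x(t_{k+1})=e^{AT_k}Jx(t_k)$, hence $V(x(t_{k+1}))-V(x(t_k))=x(t_k)^T\mathscr{I}(P,A,J,T_k)x(t_k)\le-\gamma'\,V(x(t_k))$ for some $\gamma'>0$, so $V(x(t_k))\to0$; boundedness and convergence of $\|x(t)\|_2$ between impulses then follow as in the last part of the proof of Theorem \ref{Th0}, using that $A$ is Hurwitz so that the constant $\eta$ there is finite. This is precisely the conclusion of Lemma \ref{lem:minDT}, now holding uniformly over the uncertainty, which gives the claim.

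The ``hard part'' here is really only bookkeeping: one must ensure that every reduction — from the vertex pairs $\{(A_i,J_j)\}$ to the full product polytope $\mathcal{A}\times\mathcal{J}$, and from the single value $T$ to the half-line $[T,+\infty)$ — is carried out with one and the same $P$ and preserves strict negative definiteness uniformly. The monotonicity-in-$s$ step is the only substantive one and rests entirely on $\mathscr{C}(P,A)\preceq0$; the polytopic step is the elementary convexity argument already used inside the proof of Theorem \ref{th:newrobth}. One may also note that, since a common $P$ certifies all vertices, the conclusion in fact extends to systems with time-varying $A(t)\in\mathcal{A}$ and $J(t)\in\mathcal{J}$, although the theorem only claims it for fixed uncertain matrices.
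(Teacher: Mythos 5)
Your argument is correct and is exactly the route the paper intends (it omits the proof, but the preceding sentence ``Following the same reasoning as in the previous section\dots'' points precisely to combining the robust periodic result of Theorem~\ref{th:newrobth} with the monotonicity-in-$s$ argument of Lemma~\ref{lem:minDT}, in parallel with the nominal Theorem~\ref{th:minDT}); the only nit is that the uniform bound $-\gamma I$ is most cleanly obtained by compactness at $s=T$ alone and then propagated by the monotonicity, rather than by a continuity argument over the noncompact set $s\in[T,+\infty)$. Your closing aside about time-varying $A(t)\in\mathcal{A}$ is not justified by this argument, since the transition matrix of a time-varying system over a polytope is not a convex combination of the vertex exponentials, so $\mathscr{I}(P,A,J,T)\prec0$ at the vertices does not directly bound $J^T\Phi^TP\Phi J-P$; but this claim is outside the theorem and does not affect the proof.
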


\begin{theorem}[Robust maximal dwell-time]\label{th:maxDT_A}
  Assume that for some given $T>0$, there exist matrices $P,Z_j\in\mathbb{S}^n_{++}\succ0$, $U_j,Q_j\in\mathbb{S}^n$, $N_j\in\mathbb{R}^{n\times 2n}$ and $R_j\in\mathbb{R}^{n\times n}$ such that $\Psi_{ij}(T)\prec0$, $\Phi_{ij}(T)\prec0$ and $\mathscr{C}(P,A_i)\succ0$ hold for all $i=1,\ldots,N_A$ and all $j=1,\ldots,N_J$.

Then, for any impulse sequence $\{t_k\}_{k\in\mathbb{N}}$ satisfying $\eps<t_{k+1}-t_k\le T$, $\eps>0$, the uncertain linear impulsive system (\ref{eq:mainsyst})-(\ref{eq:polytopes}) is asymptotically stable. That is we have $\mathscr{I}(P,A,J,\theta)\prec0$ for all $(A,J)\in\mathcal{A}\times\mathcal{J}$ and all $\theta\in(0,T]$.
\end{theorem}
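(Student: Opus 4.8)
The plan is to reduce this robust statement to the nominal maximal dwell-time result, exactly mirroring the way Theorem~\ref{th:maxDT} was obtained from Lemma~\ref{lem:maxDT} and Theorem~\ref{th:imp:nominal}: the matrix $P$ produced by the LMIs will serve as a common quadratic Lyapunov function for every admissible realization of the system matrices. First I would invoke the argument already carried out in the proof of Theorem~\ref{th:newrobth}: multiplying $\Psi_{ij}(T)\prec0$ by the barycentric coordinates of $A$ in $\mathcal{A}$ and summing over $i$, performing the Schur complement, and then repeating the convex-combination step over the coordinates of $J$ in $\mathcal{J}$, one concludes that $\mathscr{I}(P,A,J,T)\prec0$ holds for every $(A,J)\in\mathcal{A}\times\mathcal{J}$; the companion inequalities $\Phi_{ij}(T)\prec0$ are precisely what is needed to close that Schur complement, as in Theorem~\ref{th:imp:nominal}.

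Next I would observe that $\mathscr{C}(P,\cdot)$ is linear in its matrix argument, so the hypotheses $\mathscr{C}(P,A_i)\succ0$, $i=1,\ldots,N_A$, immediately yield $\mathscr{C}(P,A)\succ0$ for every $A\in\mathcal{A}=\co\{A_1,\ldots,A_{N_A}\}$ by taking the corresponding convex combination of positive definite matrices. With these two facts in hand, for each \emph{fixed} pair $(A,J)\in\mathcal{A}\times\mathcal{J}$ the inequalities $\mathscr{C}(P,A)\succ0$ and $\mathscr{I}(P,A,J,T)\prec0$ are exactly the hypotheses of Lemma~\ref{lem:maxDT}. Its proof is the monotonicity argument of Lemma~\ref{lem:minDT} with the opposite sign: since $\frac{d}{ds}V(x(t_k+s))=x(t_k)^TJ^Te^{A^Ts}\mathscr{C}(P,A)e^{As}Jx(t_k)\ge0$, the map $s\mapsto J^Te^{A^Ts}Pe^{As}J$ is nondecreasing, hence $\mathscr{I}(P,A,J,\theta)\preceq\mathscr{I}(P,A,J,T)\prec0$ for all $\theta\in(0,T]$, which is exactly the announced uniform bound over the dwell-time range.

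It then remains to turn this pointwise decrease into asymptotic stability, for which I would reproduce the final part of the proof of Theorem~\ref{Th0}. For any admissible sequence one has $x(t_{k+1})=e^{AT_k}Jx(t_k)$ with $T_k\in(\eps,T]$, so $V(x(t_{k+1}))-V(x(t_k))=x(t_k)^T\mathscr{I}(P,A,J,T_k)x(t_k)<0$ and $\{V(x(t_k))\}_{k\in\mathbb{N}}$ is strictly decreasing; by compactness of $\mathcal{A}\times\mathcal{J}\times[\eps,T]$ there is a single $\rho\in(0,1)$ with $J^Te^{A^T\theta}Pe^{A\theta}J\preceq\rho P$ on that set, whence $V(x(t_k))\le\rho^{k}V(x(t_0))\to0$, and boundedness of $e^{A\theta}$ on the same compact set upgrades this to $\|x(t)\|_2\to0$ on the continuous timescale via $\mu_1\|x\|_2^2\le V(x)\le\mu_2\|x\|_2^2$. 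The closing sentence ``$\mathscr{I}(P,A,J,\theta)\prec0$ for all $(A,J)\in\mathcal{A}\times\mathcal{J}$ and all $\theta\in(0,T]$'' is then just a restatement of the bound obtained in the previous paragraph.

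The first two steps are a near-verbatim recombination of Theorems~\ref{th:newrobth} and~\ref{th:imp:nominal} and Lemmas~\ref{lem:minDT} and~\ref{lem:maxDT}, so they should present no difficulty. The only point needing a little care is the last step: one must upgrade the per-realization strict decrease $V(x(t_{k+1}))<V(x(t_k))$ to a \emph{uniform} geometric decay valid simultaneously for every choice of $(A,J)$ and every dwell-time in the admissible range, and this is precisely where the compactness of the polytopes and the strictly positive lower bound $\eps$ on the dwell-times are used.
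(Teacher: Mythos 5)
Your proposal is correct and follows exactly the route the paper intends: the paper omits an explicit proof and simply states that the result follows ``by the same reasoning as in the previous section,'' i.e., by combining the convex-combination/Schur-complement argument of Theorem~\ref{th:newrobth} (giving $\mathscr{I}(P,A,J,T)\prec0$ on all of $\mathcal{A}\times\mathcal{J}$) with the monotonicity argument of Lemma~\ref{lem:maxDT} applied to each fixed realization, using linearity of $\mathscr{C}(P,\cdot)$ to propagate $\mathscr{C}(P,A)\succ0$ over the polytope. Your final compactness step making the geometric decay uniform is slightly more detailed than anything the paper writes down, but it is consistent with (and tightens) the stability argument in the proof of Theorem~\ref{Th0}.
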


\begin{theorem}[Robust ranged dwell-time]\label{th:imp_robA}
Assume that for some $\eps<T_{min}<T_{max}<+\infty$, $\eps>0$, there exist matrices $P,Z_j\in\mathbb{S}_{++}^n$, $Q_j,U_j\in\mathbb{S}^n$, $R_j\in\mathbb{R}^{n\times n}$ and $N_j\in\mathbb{R}^{n\times 2n}$ such that $\Psi_{ij}(T)\prec0$ and $\Phi_{ij}(T)\prec0$ hold for all $T\in\{T_{min},T_{\max}\}$, all $i=1,\ldots,N_A$ and all $j=1,\ldots,N_J$.

Then, for any impulse sequence $\{t_k\}_{k\in\mathbb{N}}$ satisfying $t_{k+1}-t_k\in[T_{min},T_{max}]$, the uncertain linear impulsive system (\ref{eq:mainsyst})-(\ref{eq:polytopes}) is asymptotically stable. That is we have $\mathscr{I}(P,A,J,\theta)\prec0$ for all $(A,J)\in\mathcal{A}\times\mathcal{J}$ and all $\theta\in[0,T]$.
\end{theorem}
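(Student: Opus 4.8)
The plan is to combine the polytopic-uncertainty device used in the proof of Theorem~\ref{th:newrobth} with the dwell-time-interval argument of Theorem~\ref{th:aperiodic_range}, the common matrix $P$ making the resulting Lyapunov certificate \emph{quadratic} (i.e.\ valid uniformly over $\mathcal{A}\times\mathcal{J}$). First I would recall that, once the affine Jensen bound has been used, the matrix-valued map underlying the conditions of Theorem~\ref{th:imp:nominal} written for the $i$-th vertex of $\mathcal{A}$ and the $j$-th vertex of $\mathcal{J}$---denote it $E_{ij}(\tau,T)$, of the form $F_0+\tau F_1+(T-\tau)F_2+(T-2\tau)F_3$ with $F_1=N_j^TZ_j^{-1}N_j$ and all matrices built from $A_i$, $J_j$ and the $j$-indexed slack variables---is \emph{jointly affine} in $(\tau,T)$, precisely because $F_1$ depends on neither argument. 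Moreover, after the relevant Schur complements, $\Psi_{ij}(T)\prec0$ is equivalent to $E_{ij}(0,T)\prec0$ and $\Phi_{ij}(T)\prec0$ is equivalent to $E_{ij}(T,T)\prec0$.

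Next I would exploit this affineness exactly as in Theorem~\ref{th:aperiodic_range}. The hypothesis gives $E_{ij}(0,T_{min})\prec0$, $E_{ij}(T_{min},T_{min})\prec0$, $E_{ij}(0,T_{max})\prec0$ and $E_{ij}(T_{max},T_{max})\prec0$ for all $i,j$; since these four points are the vertices of the trapezoid $\{(\tau,T):T\in[T_{min},T_{max}],\ \tau\in[0,T]\}$ and $E_{ij}$ is affine, $E_{ij}(\tau,T)\prec0$ holds on the whole trapezoid, i.e.\ $\Psi_{ij}(\theta)\prec0$ and $\Phi_{ij}(\theta)\prec0$ for every $\theta\in[T_{min},T_{max}]$ and all $i,j$. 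Now fix $\theta\in[T_{min},T_{max}]$ and $(A,J)=\left(\sum_i\kappa^1_iA_i,\sum_j\kappa^2_jJ_j\right)$ with $\kappa^1,\kappa^2$ ranging over the unit $N_A$- and $N_J$-simplices, and replay the chain of convex combinations and Schur complements of the proof of Theorem~\ref{th:newrobth} at this frozen dwell-time $\theta$: combining the $\Psi_{ij}(\theta),\Phi_{ij}(\theta)$ with weights $\kappa^1_i$ and applying a Schur complement yields the nominal-form conditions of Theorem~\ref{th:imp:nominal} for $(A,J_j)$, hence $\mathscr{I}(P,A,J_j,\theta)\prec0$; rewriting this as the $J_j$-linear inequality $\begin{bmatrix}-P & J_j^Te^{A^T\theta}P\\ \star & -P\end{bmatrix}\prec0$, combining with weights $\kappa^2_j$ and applying one last Schur complement gives $\mathscr{I}(P,A,J,\theta)\prec0$. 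As $\theta$ and $(A,J)$ are arbitrary, this establishes $\mathscr{I}(P,A,J,\theta)\prec0$ for all $(A,J)\in\mathcal{A}\times\mathcal{J}$ and all $\theta\in[T_{min},T_{max}]$.

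Finally I would invoke Lemma~\ref{th:ranged}: the uniform validity of $\mathscr{I}(P,A,J,\theta)\prec0$ over the interval, with a single matrix $P$, implies asymptotic stability of the uncertain impulsive system for every sequence $\{t_k\}_{k\in\mathbb{N}}$ with $t_{k+1}-t_k\in[T_{min},T_{max}]$, which is the claim. The one step that really requires care---and the natural place for an error---is the propagation from the two endpoint values of $T$ to the whole interval: one cannot argue by monotonicity or convexity of $\mathscr{I}(P,A,J,\cdot)$, which fails in general, so the interpolation must be performed at the level of the looped-functional LMIs, where the affine dependence on $(\tau,T)$---itself a consequence of the affine Jensen bound rather than the rational one---is what legitimizes the vertex reduction. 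The remaining manipulations (double simplex summation, repeated Schur complements) are routine and already spelled out in the proof of Theorem~\ref{th:newrobth}.
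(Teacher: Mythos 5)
Your proposal is correct and follows exactly the route the paper intends (the paper itself gives no explicit proof, only the remark that the robust dwell-time results follow ``by the same reasoning'' from Theorem~\ref{th:newrobth} combined with the vertex argument of Theorem~\ref{th:aperiodic_range} and Lemma~\ref{th:ranged}). Your treatment of the interpolation in $T$ is in fact slightly more careful than the paper's: working with the jointly affine pre-Schur-complement form $E_{ij}(\tau,T)$ over the trapezoid avoids the loose claim that $\Phi(T)$ itself is convex in $T$ (its $-Z/T$ block is not), and your conclusion correctly reads $\theta\in[T_{min},T_{max}]$ where the theorem statement has the typo $[0,T]$.
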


\subsection{Examples}

\begin{example}[Robust Maximal Dwell-Time]
  Let us consider an uncertain version of the system treated in Example \ref{ex:1}. Assume that the matrix $J$ is known and that the matrix $A$ belongs to
\begin{equation*}
  \mathcal{A}:=\co\left\{\begin{bmatrix}
    1 &3\\-1& 2
  \end{bmatrix},\begin{bmatrix}
    2 &2\\0& 6
  \end{bmatrix}\right\}.
\end{equation*}

\textbf{Periodic impulses case:} An eigenvalue analysis performed over the set of all possible systems locates the maximal impulse period inside the interval $[0.1155,0.1156]$. By gridding the LMI with a very thin grid $\mathscr{I}(P,A,J,T)\prec0$, we obtain the lower bound $0.1155$ on the maximal admissible period, at the expense of a very high computational cost, gridding imprecision and large computation time. In contrast, Theorem \ref{th:newrobth} yields the lower bound on the maximal period $T_{max}^\ell=0.1148$ at a much lower computational cost and infinite precision (since the entire interval of dwell-times is considered rather than a finite subset of it), but with a slight conservatism.

\textbf{Aperiodic impulses case:} By gridding the conditions of Lemma \ref{lem:maxDT}, the lower bound $0.1155$ on the maximal dwell-time is obtained. When Theorem \ref{th:imp_robA} is used, we obtain $T_{max}^\ell=0.1148$. This demonstrates the interest of functionals for analyzing the stability of uncertain linear impulsive systems.
\end{example}

\begin{example}[Robust Ranged Dwell-Time]
  Example \ref{ex:3} is revisited here with the difference that $A$ is known while $J\in\mathcal{J}$ where
  \begin{equation*}
  \mathcal{J}:=\co\left\{\begin{bmatrix}
    1.3 & 0\\
    0 & 0.25
  \end{bmatrix},\begin{bmatrix}
    1.1 & 0\\
    0 & 0.5
  \end{bmatrix}\right\}.
\end{equation*}
An eigenvalue analysis done over the entire set of possible systems yields the following interval of admissible impulse periods $[0.2624,0.5776]$. In contrast, Theorem \ref{th:newrobth} yields the upper and lower bounds on the admissible impulse periods $T_{min}^u = 0.2625$ and $T_{max}^\ell=0.5761$. These bounds immediately generalize to the aperiodic case.
\end{example}

\section{Conclusion}

A looped-functional-based approach has been proposed for the analysis of uncertain linear impulsive systems. The main advantage of the proposed approach lies in the expression of a discrete-time stability criterion using a continuous-time approach. Such a framework allows one to easily consider non-monotonic continuous-time Lyapunov functions and is able to analyze a wider class of systems than existing approaches. As a byproduct, this approach leads to an exponential-free formulation of the stability conditions, allowing to obtain robustness result very easily. Several examples illustrate the potential of the approach.

The proposed methodology can be applied to many other types of continuous-time and hybrid systems for which a recursive discrete-time stability criterion can be applied. Furthermore, since the current formulation involves continuous-time data, the framework can be readily extended to nonlinear systems, higher order Lyapunov functions, the use of $T$-dependent Lyapunov functions, etc. These problems are way beyond the scope of this paper and are left for future works.

\bibliographystyle{elsarticle-harv}

\end{document}